\newtheorem{theorem}{Theorem}
\newtheorem{lemma}{Lemma}
\newcommand{\ad}{\,\mathrm{ad}\,}
\newcommand{\GL}{\,\mathrm{GL}\,}
\newcommand{\SL}{\,\mathrm{SL}\,}
\begin{document}

\begin{center}

{\Large {\bf Automorphisms of Chevalley groups

of types  $A_l, D_l, E_l$ over local rings with $1/2$\footnote{The
work is supported by the Russian President grant MK-2530.2008.1 and
by the grant of Russian Fond of Basic Research 08-01-00693.} }}

\bigskip
\bigskip

{\large \bf E.~I.~Bunina}

\end{center}
\bigskip

\begin{center}

{\bf Abstract.}

\end{center}

In this paper we prove that every automorphism of a (elementary)
Chevalley group of type $A_l, D_l$, or $E_l$, $l\geqslant 2$, over
a commutative local ring with $1/2$ is standard, i.\,e., is
the composition of inner, ring, graph and central automorphisms.

\bigskip

\section*{Introduction}\leavevmode

Let $G_{\ad}$ be a Chevalley-Demazure group scheme associated with
an irreducible root system~$\Phi$ of rank $>1$, $G_{\pi}(\Phi,R)$ be
a set of points~$G_{\pi}$ with values in â~$R$;
$E_{\pi}(\Phi,R)$ be an elementary subgroup in~$G_{\pi}(\Phi,R)$,
where $R$
is a commutative ring with a unit. In the given work we describe automorphisms
of the groups $E_{\pi}(\Phi,R)$ and $G_\pi (\Phi,R)$ over local
commutative rings  with~$1/2$, for root systems of types $A_l,D_l,E_l$. Similar results for Chevalley groups over fields were proved
 by R.\,Steinberg~\cite{Stb1} for the finite case and by J.\,Humphreys~\cite{H} for the infinite case. Many papers were devoted
to description of automorphisms of Chevalley groups over different
commutative rings, we can mention here the papers of
Borel--Tits~\cite{v22}, Carter--Chen~Yu~\cite{v24},
Chen~Yu~\cite{v25}--\cite{v29}, E.\,Abe~\cite{Abe_OSN},
A.\,Klyachko~\cite{Klyachko}.

In the papers \cite{Bun1}, \cite{Bun2} the author showed that automorphisms of adjoint elementary Chevalley groups (with roots systems under consideration)
over local rings with~$1/2$ can be represented as the composition of ring and  \emph{automorphisms--conjugation}, where  automorphism--conjugation is a conjugation of the Chevalley group in the adjoint representation with some matrix from the normalizer of this group in $\GL(V)$.

Using results of the papers \cite{Bun1},~\cite{Bun2}, here we can describe  automorphisms  of (elementary) Chevalley groups
of ranks $>1$ over arbitrary commutative rings with $1/2$ and root systems $A_l,D_l,E_l$ (prove that they are standard). By standard automorphisms we mean here compositions of inner, ring, graph and central automorphisms.

To proof the main theorem we describe normalizers of adjoint elementary Chevalley groups in the adjoint representation. Note that the normalizer of the simply connected Chevalley group of type $E_6$ in its $27$-dimensional representation is described by Vavilov and Luzgarev in~\cite{VavLuzg}.

The author is thankful to N.A.\,Vavilov,  A.A.\,Klyachko,
A.V.\,Mikhalev for valuable advices, remarks and discussions.

\section{Definitions and main theorems.}\leavevmode

We fix a root system~$\Phi$ of rank $>1$. Detailed texts about root
systems and their properties can be found in the books
\cite{Hamfris}, \cite{Burbaki}. Suppose now that we have a
semisimple complex Lie algebra~$\mathcal L$ of type $\Phi$ with
Cartan subalgebra~$\mathcal H$ (detailed information about
semisimple Lie algebras can be found in the book~\cite{Hamfris}).

We can choose a basis $\{ h_1, \dots, h_l\}$ in~$\mathcal H$ and for
every $\alpha\in \Phi$ elements $x_\alpha \in {\mathcal L}_\alpha$
so that $\{ h_i; x_\alpha\}$ is a basis in~$\mathcal L$ and for
every two elements of this basis their commutator is an integral
linear combination of the elements of the same basis.

Introduce now elementary Chevalley groups (see~\cite{Steinberg}).

Let  $\mathcal L$ be a semisimple Lie algebra (over~$\mathbb C$)
with a root system~$\Phi$, $\pi: {\mathcal L}\to gl(V)$ be its
finitely dimensional faithful representation  (of dimension~$n$). If
$\mathcal H$ is a Cartan subalgebra of~$\mathcal L$, then a
functional
 $\lambda \in {\mathcal H}^*$ is called a
 \emph{weight} of  a given representation, if there exists a nonzero vector $v\in V$
 (that is called a  \emph{weight vector}) such that
for any $h\in {\mathcal H}$ $\pi(h) v=\lambda (h)v.$

In the space~$V$ there exists a basis of weight vectors such that
all operators $\pi(x_\alpha)^k/k!$ for $k\in \mathbb N$ are written
as integral (nilpotent) matrices. This basis is called a
\emph{Chevalley basis}. An integral matrix also can be considered as
a matrix over an arbitrary commutative ring with~$1$. Let $R$ be
such a ring. Consider matrices $n\times n$ over~$R$, matrices
$\pi(x_\alpha)^k/k!$ for
 $\alpha\in \Phi$, $k\in \mathbb N$ are included in $M_n(R)$.

Now consider automorphisms of the free module $R^n$ of the form
$$
\exp (tx_\alpha)=x_\alpha(t)=1+t\pi(x_\alpha)+t^2
\pi(x_\alpha)^2/2+\dots+ t^k \pi(x_\alpha)^k/k!+\dots
$$
Since all matrices $\pi(x_\alpha)$ are nilpotent, we have that this
series is finite. Automorphisms $x_\alpha(t)$ are called
\emph{elementary root elements}. The subgroup in $Aut(R^n)$,
generated by all $x_\alpha(t)$, $\alpha\in \Phi$, $t\in R$, is
called an \emph{elementary Chevalley group} (notation:
$E_\pi(\Phi,R)$).

The action of elements $x_\alpha(t)$ on the Chevalley basis is described in
\cite{v23}, \cite{VavPlotk1}.

All weights of a given representation (by addition) generate a
lattice (free Abelian group, where every  $\mathbb Z$-basis  is also
a $\mathbb C$-basis in~${\mathcal H}^*$), that is called the
\emph{weight lattice} $\Lambda_\pi$.

Elementary Chevalley groups are defined not even by a representation
of the Chevalley groups, but just by its \emph{weight lattice}.
Namely, up to an abstract isomorphism an elementary Chevalley group
is completely defined by a root system~$\Phi$, a commutative
ring~$R$ with~$1$ and a weight lattice~$\Lambda_\pi$.

Among all lattices we can mark  the lattice corresponding to the
adjoint representation: it is generated by all roots (the \emph{root
lattice}~$\Lambda_{ad}$). The corresponding (elementary) Chevalley group is called \emph{adjoint}.

Introduce now Chevalley groups (see~\cite{Steinberg},
\cite{Chevalley}, \cite{v3}, \cite{v23}, \cite{v30}, \cite{v43},
\cite{VavPlotk1}, and also latter references in these papers).

Consider semisimple linear algebraic groups over algebraically
closed fields. These are precisely elementary Chevalley groups
$E_\pi(\Phi,K)$ (see.~\cite{Steinberg},~\S\,5).

All these groups are defined in $SL_n(K)$ as  common set of zeros of
polynomials of matrix entries $a_{ij}$ with integer coefficients
 (for example,
in the case of the root system $C_l$ and the universal
representation we have $n=2l$ and the polynomials from the condition
$(a_{ij})Q(a_{ji})-Q=0$). It is clear now that multiplication and
taking inverse element are also defined by polynomials with integer
coefficients. Therefore, these polynomials can be considered as
polynomials over arbitrary commutative ring with a unit. Let some
elementary Chevalley group $E$ over~$\mathbb C$ be defined in
$SL_n(\mathbb C)$ by polynomials $p_1(a_{ij}),\dots, p_m(a_{ij})$.
For a commutative ring~$R$ with a unit let us consider the group
$$
G(R)=\{ (a_{ij})\in \SL_n(R)\mid \widetilde p_1(a_{ij})=0,\dots
,\widetilde p_m(a_{ij})=0\},
$$
where  $\widetilde p_1(\dots),\dots \widetilde p_m(\dots)$ are
polynomials having the same coefficients as
$p_1(\dots),\dots,p_m(\dots)$, but considered over~$R$.

This group is called the \emph{Chevalley group} $G_\pi(\Phi,R)$ of
the type~$\Phi$ over the ring~$R$, and for every algebraically
closed field~$K$ it coincides with the elementary Chevalley group.

The subgroup of diagonal (in the standard basis of weight vectors)
matrices of the Chevalley group $G_\pi(\Phi,R)$ is called the
 \emph{standard maximal torus}
of $G_\pi(\Phi,R)$ and it is denoted by $T_\pi(\Phi,R)$. This group
is isomorphic to $Hom(\Lambda_\pi, R^*)$.

Let us denote by $h(\chi)$ the elements of the torus $T_\pi
(\Phi,R)$, corresponding to the homomorphism $\chi\in Hom
(\Lambda(\pi),R^*)$.

In particular, $h_\alpha(u)=h(\chi_{\alpha,u})$ ($u\in R^*$, $\alpha
\in \Phi$), where
$$
\chi_{\alpha,u}: \lambda\mapsto u^{\langle
\lambda,\alpha\rangle}\quad (\lambda\in \Lambda_\pi).
$$

Note that the condition
$$
G_\pi (\Phi,R)=E_\pi (\Phi,R)
$$
is not true even for fields, that are not algebraically closed.
Let us show the difference between Chevalley groups and their elementary subgroups in the case when $R$ is semilocal. In this case
 $G_\pi (\Phi,R)=E_\pi(\Phi,R)T_\pi(\Phi,R)$
(see~\cite{v38}, \cite{Abe1}, \cite{v19}), and elements $h(\chi)$ are connected with elementary generators
by the formula
\begin{equation}\label{ee4}
h(\chi)x_\beta (\xi)h(\chi)^{-1}=x_\beta (\chi(\beta)\xi).
\end{equation}

 In the case of semilocal rings from  the formula~\eqref{ee4} we see that
$$
[G(\Phi,R),G(\Phi,R)]\subseteq E(\Phi,R).
$$
If $R$ (as in our case) also contains~$1/2$,
$l\geqslant 2$, we can easily show that
$$
[G(\Phi,R),G(\Phi,R)]=[E(\Phi,R), E(\Phi,R)]=E(\Phi,R).
$$

Define four types of automorphisms of a Chevalley group
 $G_\pi(\Phi,R)$, we
call them  \emph{standard}.

{\bf Central automorphisms.} Let $C_G(R)$ be a center of
$G_\pi(\Phi,R)$, $\tau: G_\pi(\Phi,R) \to C_G(R)$ be some
homomorphism of groups. Then the mapping $x\mapsto \tau(x)x$ from
$G_\pi(\Phi,R)$ onto itself is an automorphism of $G_\pi(\Phi,R)$,
that is denoted by~$\tau$ and called a \emph{central automorphism}
of the group~$G_\pi(\Phi,R)$.

{\bf Ring automorphisms.} Let $\rho: R\to R$ be an automorphism of
the ring~$R$. The mapping $x\mapsto \rho (x)$ from $G_\pi(\Phi,R)$
onto itself is an automorphism of the group $G_\pi(\Phi,R)$, that is
denoted by the same letter~$\rho$ and is called a \emph{ring
automorphism} of the group~$G_\pi(\Phi,R)$. Note that for all
$\alpha\in \Phi$ and $t\in R$ an element $x_\alpha(t)$ is mapped to
$x_\alpha(\rho(t))$.

{\bf Inner automorphisms.} Let $S$ be some ring containing~$R$,  $g$
be an element of $G_\pi(\Phi,S)$, that normalizes the subgroup $G_\pi(\Phi,R)$. Then
the mapping $x\mapsto gxg^{-1}$  is an automorphism
of the group~$G_\pi(\Phi,R)$, that is denoted by $i_g$ and is called an
\emph{inner automorphism}, \emph{induced by the element}~$g\in G_\pi(\Phi,S)$. If $g\in G_\pi(\Phi,R)$, then call $i_g$ a \emph{strictly inner}
automorphism.

{\bf Graph automorphisms.} Let $\delta$ be an automorphism of the
root system~$\Phi$ such that $\delta \Delta=\Delta$. Then there
exists a unique automorphisms of $G_\pi (\Phi,R)$ (we denote it by
the same letter~$\delta$) such that for every $\alpha \in \Phi$ and
$t\in R$ an element $x_\alpha (t)$ is mapped to
$x_{\delta(\alpha)}(\varepsilon(\alpha)t)$, where
$\varepsilon(\alpha)=\pm 1$ for all $\alpha \in \Phi$ and
$\varepsilon(\alpha)=1$ for all $\alpha\in \Delta$.

 Similarly we can define four type of automorphisms of the elementary
subgroup~$E(R)$.
An automorphism~$\sigma$ of the group
 $G_\pi(\Phi,R)$ (or $E_\pi(\Phi,R)$)
is called  \emph{standard} if it is a composition of automorphisms
of these introduced four types.

Together with standard automorphisms we use the following ''temporary'' type of automorphisms of an elementary adjoint Chevalley group:

{\bf Automorphisms--conjugations.} Let $V$ be the representation space of the group $E_{\ad} (\Phi,R)$, $C\in \GL(V)$ be some matrix from its normalizer:
$$
C E_{\ad}(\Phi,R) C^{-1}= E_{\ad} (\Phi,R).
$$
 Then the mapping
 $x\mapsto CxC^{-1}$ from $E_\pi(\Phi,R)$ onto itself is an automorphism of the Chevalley group, that is denoted by~$i_Ñ$ and is called an  \emph{automorphism--conjugation} of~$E(R)$,
\emph{induced by an element}~$C$ of~$\GL(V)$.

Our aim is to prove the next main theorem:

\begin{theorem}\label{main}
Let $G=G_{\pi}(\Phi,R)$ $(E_\pi(\Phi,R))$
be an \emph{(}elementary\emph{)} Chevalley group with a the root system $A_l, D_l$, or $E_l$, $l\geqslant 2$, $R$ is a commutative local ring
with~$1/2$. Then every automorphism of~$G$ is standard. If the Chevalley group is adjoint, then an inner automorphism in the composition is strictly
inner.
\end{theorem}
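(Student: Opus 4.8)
The plan is to reduce the general case to the adjoint case already handled in the author's earlier papers~\cite{Bun1},~\cite{Bun2}. The key bridge is the observation, recorded in the excerpt, that over a local ring with $1/2$ any automorphism of $G_\pi(\Phi,R)$ restricts to an automorphism of the commutator subgroup, and that $[G_\pi,G_\pi]=E_\pi(\Phi,R)$. Thus I would first show that an arbitrary automorphism~$\sigma$ of $G_\pi(\Phi,R)$ (or of $E_\pi(\Phi,R)$) is determined, up to a central automorphism, by its restriction to the elementary subgroup. Central automorphisms are harmless because they compose into the standard list, so it suffices to standardize $\sigma|_{E_\pi}$.

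Next I would pass from the given weight lattice $\Lambda_\pi$ to the root lattice $\Lambda_{\ad}$ by using the canonical central isogeny $E_\pi(\Phi,R)\to E_{\ad}(\Phi,R)$, whose kernel is central. An automorphism of $E_\pi$ induces an automorphism $\bar\sigma$ of $E_{\ad}(\Phi,R)$. By~\cite{Bun1},~\cite{Bun2}, this $\bar\sigma$ is the composition of a ring automorphism~$\rho$ and an automorphism--conjugation $i_C$ by some $C\in\GL(V)$ normalizing $E_{\ad}(\Phi,R)$. The heart of the argument is then to analyze this normalizer: I would prove that any such~$C$, after factoring out the obvious ring part, acts on the elementary generators $x_\alpha(t)$ exactly as a graph automorphism composed with a strictly inner automorphism. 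Concretely, I expect to show that the normalizer of $E_{\ad}(\Phi,R)$ in $\GL(V)$ is generated by the image of $E_{\ad}$ itself (giving strictly inner pieces), the diagonal torus action, the graph symmetries~$\delta$ of the Dynkin diagram, and scalars; scalars act trivially in the adjoint representation, and the torus contribution can be absorbed into an inner automorphism induced by an element of the larger group $G_\pi(\Phi,S)$ for a suitable extension~$S$.

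The main obstacle I anticipate is precisely this description of automorphisms--conjugations in terms of the four standard types, together with the lifting problem: having standardized $\bar\sigma$ on $E_{\ad}$, I must lift each standard factor back to $E_\pi$ (and thence to $G_\pi$) and verify that the lift is again standard of the same kind. Ring and graph automorphisms lift compatibly because they are defined lattice-independently on generators $x_\alpha(t)\mapsto x_{\delta(\alpha)}(\varepsilon(\alpha)\rho(t))$; the inner/conjugation part requires care, since an element of $\GL(V)$ normalizing the adjoint group must be realized as conjugation by an element of $G_\pi(\Phi,S)$ normalizing $G_\pi(\Phi,R)$ for the lattice~$\pi$. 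I would handle this by exhibiting the normalizing matrix as $h\cdot g$ with $h$ in an extended torus and $g\in E_\pi$, using the formula~\eqref{ee4} to see how torus elements rescale the generators, and checking that the resulting rescalings are exactly those permitted by a central automorphism.

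Finally, I would assemble the pieces: writing $\sigma = \tau\circ\rho\circ\delta\circ i_g$ on the elementary subgroup, I would extend this identity to all of $G_\pi(\Phi,R)=E_\pi(\Phi,R)T_\pi(\Phi,R)$ by checking agreement on the torus, where the only freedom is again central and ring-theoretic. In the adjoint case the center is trivial and the extension ring~$S$ can be taken equal to~$R$, so the conjugating element lies in $G_{\ad}(\Phi,R)=E_{\ad}(\Phi,R)$ itself, which is why the inner factor becomes strictly inner, as the theorem asserts. The delicate bookkeeping of signs~$\varepsilon(\alpha)$ and of the values $\chi(\beta)$ in~\eqref{ee4} is where I expect the real work to lie, but none of it should present a conceptual barrier once the normalizer of the adjoint group is understood.
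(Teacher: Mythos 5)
Your outline follows the same route as the paper: restrict to the elementary subgroup (the commutant), pass to the adjoint quotient, invoke the earlier result that automorphisms of $E_{\ad}(\Phi,R)$ are ring automorphisms composed with automorphisms--conjugations, standardize the conjugating matrix, lift the torus part of the conjugating element to $G_\pi(\Phi,S)$ for a suitable extension $S$, and absorb the residual discrepancy on the torus into a central automorphism. The architecture is right, and the two places you flag as delicate are exactly the two statements the paper proves separately (the normalizer description, Theorem~\ref{norm}, and the torus-lifting Lemma~\ref{tor}).

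The gap is that the decisive step --- your claim that the normalizer of $E_{\ad}(\Phi,R)$ in $\GL(V)$ is generated by $G_{\ad}(\Phi,R)$ together with the graph symmetries --- is only announced, and this is where essentially all of the paper's work lies (three of its five sections). The paper proves it by first reducing modulo the radical $J$: a normalizing matrix $C$ preserves $E+M_n(J)$, hence induces an automorphism--conjugation over the residue field $k=R/J$, where Steinberg's theorem applies; after correcting by a graph matrix and a lift of an element of the group over $k$, one may assume $C\equiv E \pmod{J}$. It then shows that such a $C$ lies in $G_{\ad}(\Phi,R)$ by means of a nontrivial parametrization lemma (recovering the coordinates of a product $\lambda t_{\alpha_1}(s_1)\cdots x_{-\alpha_m}(u_m)$ from $n+1$ designated matrix entries, using carefully chosen sequences of roots for each type $A_l$, $D_l$, $E_l$) and by linearizing the matrix equations $Cx_\alpha(1)C^{-1}=x_\alpha(1)g_\alpha$, $g_\alpha\in G_{\ad}(\Phi,R,J)$, and verifying that the resulting linear system forces all free parameters to vanish. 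None of this is routine, and ``the normalizer is generated by the obvious elements'' cannot simply be asserted; for other representations (e.g.\ the $27$-dimensional representation of $E_6$) the analogous statement is itself a separate hard theorem in the literature. Two smaller inaccuracies: over a local ring $G_{\ad}(\Phi,R)\neq E_{\ad}(\Phi,R)$ in general, and the strictly inner conclusion in the adjoint case comes from the conjugating matrix landing in $G_{\ad}(\Phi,R)$ (permitted by the definition of strictly inner), not from triviality of the center; and the central factors $\lambda_{\alpha,t}$ appearing after the graph step must actually be shown to vanish, which the paper does via the commutator identity $x_{\alpha+\beta}(t)=[x_\alpha(t),x_\beta(1)]$ rather than by absorbing them into a central automorphism.
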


To prove this theorem we use the main theorem of the paper~\cite{Bun2}:

\begin{theorem}\label{old}
Every automorphism of an elementary adjoint Chevalley group of type
$A_l,D_l$, or $E_l$, $l\geqslant 2$, over a local ring with~$1/2$
is a composition of a ring automorphism and an automorphism--conjugation.
\end{theorem}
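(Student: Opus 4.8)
The plan is to reduce an arbitrary automorphism to the adjoint elementary case covered by Theorem~\ref{old}, and then to transport the resulting standard decomposition back across the central isogeny that links the $\pi$-lattice group with the adjoint group. I first treat the elementary group $E=E_\pi(\Phi,R)$. Its center $C$ is a characteristic subgroup, so any automorphism $\varphi\in\mathrm{Aut}(E)$ descends to an automorphism $\bar\varphi$ of the quotient $E/C$. Under the isogeny induced by the inclusion of lattices $\Lambda_{\ad}\subseteq\Lambda_\pi$ the elementary generators $x_\alpha(t)$ go to elementary generators, whence $E/C$ is canonically isomorphic to the adjoint elementary group $E_{\ad}(\Phi,R)$. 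Applying Theorem~\ref{old} to $\bar\varphi$ yields $\bar\varphi=\rho\circ i_C$, with $\rho$ a ring automorphism and $i_C$ an automorphism--conjugation by some $C\in\GL(V)$ normalizing $E_{\ad}(\Phi,R)$.

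The second step converts $i_C$ into genuine standard automorphisms. Here I would use the description of the normalizer $N_{\GL(V)}\bigl(E_{\ad}(\Phi,R)\bigr)$ in the adjoint representation announced in the introduction: modulo scalar matrices, which act trivially by conjugation, every such $C$ factors as a product of a matrix realizing a graph automorphism $\delta$ and a matrix conjugating as an element of the adjoint Chevalley group $G_{\ad}(\Phi,R)$ itself. Because for the adjoint lattice the whole torus $T_{\ad}(\Phi,R)$ already lies in $G_{\ad}(\Phi,R)$, this last factor is a \emph{strictly} inner automorphism $i_g$, $g\in G_{\ad}(\Phi,R)$. Thus $\bar\varphi=\rho\circ\delta\circ i_g$ is standard with strictly inner part.

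Third, I lift this decomposition back to $E$. Ring and graph automorphisms are given on every $E_\pi(\Phi,R)$ by the same formulas on the generators $x_\alpha(t)$, so $\rho$ and $\delta$ lift canonically; the factor $i_g$ lifts to conjugation by a preimage $\tilde g$ of $g$ under the isogeny, which lies in $G_\pi(\Phi,S)$ for a suitable extension $S\supseteq R$ (and in $G_\pi(\Phi,R)$ when $\pi=\ad$). Composing $\varphi$ with the inverses of these three lifted automorphisms gives $\psi\in\mathrm{Aut}(E)$ inducing the identity on $E/C$. Then $\tau\colon x\mapsto\psi(x)x^{-1}$ takes values in $C$, and since $C$ is central and $\psi$ is an automorphism, $\tau$ is a homomorphism $E\to C$; that is, $\psi$ is a central automorphism. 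Hence $\varphi$ is standard, strictly inner when $\pi=\ad$. Finally, over the local ring $R$ one has $G_\pi(\Phi,R)=E_\pi(\Phi,R)T_\pi(\Phi,R)$ and $[G_\pi,G_\pi]=E_\pi(\Phi,R)$, so $E_\pi(\Phi,R)$ is characteristic in $G_\pi(\Phi,R)$; an automorphism of the full group restricts to the elementary subgroup, the above applies there, and its action on the torus is recovered from~\eqref{ee4}, so the decomposition extends to all of $G_\pi(\Phi,R)$.

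I expect the crux to be the normalizer computation of the second step --- isolating inside $N_{\GL(V)}(E_{\ad}(\Phi,R))$ exactly the strictly inner and graph contributions and ruling out anything else --- together with the bookkeeping of the third step: verifying that the residual $\psi$ is precisely central, and that the inner part is truly strictly inner for adjoint groups rather than merely inner.
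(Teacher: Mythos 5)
Your proposal does not prove the statement at issue: it is circular. Theorem~\ref{old} asserts that an \emph{arbitrary abstract} automorphism of the elementary adjoint group $E_{\ad}(\Phi,R)$ factors as a ring automorphism composed with an automorphism--conjugation, and your argument invokes exactly this --- ``Applying Theorem~\ref{old} to $\bar\varphi$ yields $\bar\varphi=\rho\circ i_C$'' --- as its first substantive step. Nowhere do you supply an independent reason why an abstract automorphism of $E_{\ad}(\Phi,R)$ should be realized by conjugation by some matrix in $\GL(V)$ at all; that is the entire content of the theorem, and it is the hard part. In the paper this statement is not reproved but imported wholesale from \cite{Bun2} (see also \cite{Bun1}), where it is established by a genuinely different kind of argument: starting from an abstract $\varphi$, reducing modulo the radical $J$, using Steinberg's description of automorphisms over the residue field $k=R/J$, analyzing images of generators, and lifting back to $R$. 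Note also that the statement already concerns the adjoint group, so your opening reduction across the isogeny $\Lambda_{\ad}\subseteq\Lambda_\pi$ is idle for the task at hand.

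What you have actually sketched is the paper's derivation of Theorem~\ref{main} from Theorems~\ref{old} and~\ref{norm}: your second step (factoring $i_C$ into a strictly inner and a graph automorphism modulo scalars) is precisely Theorem~\ref{norm}, which the paper spends three sections proving and which you assert on the strength of a ``description announced in the introduction''; your third step reproduces the paper's final section, including the lifting of the torus part (the existence of a preimage $\tilde g\in G_\pi(\Phi,S)$ normalizing $E_\pi(\Phi,R)$ is exactly Lemma~\ref{tor}, which requires explicit root-by-root constructions of ring extensions, not just the word ``isogeny'') and the verification that the residual automorphism $\psi$ is central, where you should also note that since $E_\pi(\Phi,R)$ is perfect for $l\geqslant 2$ with $1/2\in R$, the homomorphism $\tau\colon E\to C$ is in fact trivial on the elementary group. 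As an outline of Theorem~\ref{main} this is essentially the paper's route; as a proof of Theorem~\ref{old} it establishes nothing, and a correct blind proof would have to reconstruct the argument of \cite{Bun2} instead.
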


To apply Theorem~\ref{old}, we need to prove the following important (having also a proper interest) fact:

\begin{theorem}\label{norm}
Every  automorphism--conjugation of an elementary adjoint Chevalley group  of type $A_l, D_l$, or $E_l$, $l\geqslant 2$, over a local ring is a composition of a strictly inner \emph{(}conjugation with the help of the corresponding Chevalley group\emph{)} and graph automorphisms.
\end{theorem}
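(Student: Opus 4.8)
The plan is to analyze the normalizing matrix $C \in \GL(V)$ directly and to peel off strictly inner and graph automorphisms until $i_C$ becomes conjugation by an element of the standard torus. Here $V = \mathcal{H}_R \oplus \bigoplus_{\alpha \in \Phi} R x_\alpha$ is the adjoint module, the Chevalley basis being a basis of weight vectors: $x_\alpha$ has weight $\alpha$ with multiplicity one, and the zero weight space is $\mathcal{H}_R = \bigoplus_i R h_i$. Each generator acts as $x_\alpha(t) = \exp(t\,\ad x_\alpha)$, and the torus $T = T_{\ad}(\Phi,R)$ acts diagonally, with $h(\chi)\,x_\alpha = \chi(\alpha)\,x_\alpha$ and $h(\chi)\,h_i = h_i$. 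The whole point is that conjugation by the \emph{linear} map $C$ cannot absorb a nontrivial ring automorphism, since those require semilinear maps; this is what will leave only inner and graph automorphisms at the end. Throughout we use that $\Phi$ is simply laced, so that all graph automorphisms come from symmetries $\delta$ of the Dynkin diagram and all structure constants satisfy $|N_{\alpha,\beta}| \le 1$.

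First I would show that, after composing with a strictly inner and a graph automorphism, $C$ may be assumed to stabilize every root subgroup $X_\alpha = \{x_\alpha(t)\}$ and the torus $T$. Because $C$ normalizes $E_{\ad}(\Phi,R)$, the automorphism $i_C$ permutes the distinguished family of root subgroups up to the abstract group structure. To read off the permutation I would reduce modulo the maximal ideal $\mathfrak m$: the induced conjugation by $\overline C$ on $E_{\ad}(\Phi, R/\mathfrak m)$ is an automorphism--conjugation over a field, and since a nontrivial field automorphism can only be realized by a semilinear map, the linearity of $\overline C$ forces this conjugation to be a composition of an inner and a graph automorphism. This identifies a Weyl-group element $w \in E_{\ad}(\Phi,R)$ and a diagram symmetry $\delta$; composing $i_C$ with $w$ (strictly inner) and with the graph automorphism $\delta$, I obtain a new conjugating matrix $C'$ that stabilizes each $X_\alpha$ and $T$, and hence preserves each weight line $R x_\alpha$ as well as $\mathcal H_R$.

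With $C'$ stabilizing $X_\alpha$, we have $C' x_\alpha(t) C'^{-1} = x_\alpha(\phi_\alpha(t))$ for an additive bijection $\phi_\alpha$ of $R$. Writing $C' x_\beta = d_\beta x_\beta$ with $d_\beta \in R^*$ and comparing, on both sides, the matrix blocks that raise the weight by exactly $\alpha$ (these come solely from the first-order term $\ad x_\alpha$, via $x_\beta \mapsto N_{\alpha,\beta} x_{\alpha+\beta}$ and $h \mapsto -\alpha(h) x_\alpha$), the $R$-linearity of $C'$ gives $\phi_\alpha(t) = (d_{\alpha+\beta}/d_\beta)\,t$ --- a scalar multiplication, not a genuine ring automorphism. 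Thus $\phi_\alpha(t) = c_\alpha t$ with $c_\alpha \in R^*$ independent of $\beta$, which forces $d_{\alpha+\beta} = c_\alpha d_\beta$ and, together with the entries landing back in $\mathcal H_R$ (from $[x_{-\alpha},x_\alpha] \in \mathcal H$), pins down $C'|_{\mathcal H_R}$ as a scalar. Assembling these multiplicative relations over the simple roots shows $d_\beta = \mu\,\chi(\beta)$ and $C'|_{\mathcal H_R} = \mu\cdot\mathrm{id}$ for some $\mu \in R^*$ and some $\chi \in \Hom(\Lambda_{\ad}, R^*)$, so $C' = \mu\,h(\chi)$. Since the scalar $\mu$ is central in $\GL(V)$ and invisible to conjugation, $i_{C'} = i_{h(\chi)}$ is strictly inner, and therefore $i_C$ is a composition of a strictly inner automorphism and a graph automorphism, as claimed.

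I expect the main obstacle to be the middle step over a local, rather than algebraically closed, base: establishing rigorously that $i_C$ permutes the root subgroups and that this permutation descends from a Weyl-group element together with a diagram automorphism. The reduction modulo $\mathfrak m$ supplies the symmetry, but lifting it back to $R$ --- and in particular controlling the action of $C$ on the multiplicity-$l$ zero weight space $\mathcal H_R$, which is not singled out by any individual root line --- will require careful use of the Chevalley commutator relations and the fact that over a local ring every element outside $\mathfrak m$ is a unit.
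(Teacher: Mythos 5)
Your opening reduction modulo the maximal ideal, together with the observation that the linearity of $\overline C$ rules out a nontrivial automorphism of the residue field, is exactly the paper's Lemma~\ref{fromAnton}; and your closing computation (reading off $\phi_\alpha(t)=c_\alpha t$ and assembling $C'=\mu\, h(\chi)$) would be fine \emph{if} $C'$ were already known to preserve every weight line. The gap is the step in between. From $i_{\overline C}=i_{\overline g}\circ\delta$ over the residue field you conclude that, after composing with a strictly inner and a graph automorphism, the new matrix $C'$ stabilizes each root subgroup $X_\alpha$ and the torus. This does not follow. First, the inner factor over the field is conjugation by an arbitrary element $\overline g$ of the group, not by a Weyl-group element, so it does not induce a permutation of the root subgroups; it must itself be lifted to $R$, which the paper does by decomposing $\overline g$ into torus and unipotent factors and lifting the parameters arbitrarily. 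Second, and more seriously, after this lifting all you know about the remaining matrix is that $C'\equiv E\pmod J$, i.e., $C'x_\alpha(1)C'^{-1}=x_\alpha(1)g_\alpha$ with $g_\alpha\in G_{\ad}(\Phi,R,J)$. There is no a priori reason for the radical-valued perturbations $g_\alpha$ to vanish, so $C'$ need not stabilize any root subgroup nor preserve any weight line, and your diagonal Ansatz $C'x_\beta=d_\beta x_\beta$ is unjustified at the point where you invoke it.

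Closing that gap is the entire technical content of the paper: Lemma~\ref{prod2} shows that an element of $\lambda E_{\ad}(\Phi,R,J)$ in canonical form is determined by $n+1$ specific matrix entries (via the marked root sequences for $A_l$, $D_l$, $E_l$); this is used to manufacture an approximating element $D$ of the Chevalley group from the entries of $C'$ itself; and then a case-by-case analysis of the linearized equations $Zx_\alpha(1)-x_\alpha(1)\bigl(Z+\sum a_iT_i+\sum b_jX_{\alpha_j}+\sum c_jX_{-\alpha_j}\bigr)=0$ forces all parameters to zero and $Z$ to be scalar, whence $C'$ lies in the group. You flag this yourself as ``the main obstacle,'' but flagging it is not the same as closing it: as written, the proposal assumes at that point essentially what is to be proved.
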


Three next sections are devoted to the proof of Theorem~\ref{norm}.

\section{Reduction to systems of linear equations over local rings.}\leavevmode

In this section we consider an elementary adjoint Chevalley group
$E_{\ad}(\Phi,R)$ of types $A_l, D_l$, or $E_l$, $l\geqslant 2$, in
its adjoint $n$-dimensional representation, $R$ as an arbitrary
local ring.

Let $n=l+2m$, where $m$ is the number of positive roots of the
system~$\Phi$, basic vectors are numerated as
 $v_1=x_{\alpha_1}, v_{-1}=x_{-\alpha_1}, \dots,
v_m=x_{\alpha_m}, v_{-m}=x_{-\alpha_m}, V_1=h_{1},\dots,V_l=h_{l}$,
it respects to the Chevalley basis of~$\Phi$. For our convenience we
denote matrix units as $e_{\alpha,\beta}$, $e_{\alpha,h_i}$,
$e_{h_i,\alpha}$, $e_{h_i,h_j}$, $\alpha,\beta\in \Phi$, $1\leqslant
i,j\leqslant l$.

Suppose that we have some matrix $C=(c_{i,j})\in \GL_n(R)$ such that
$$
C\cdot E_{\ad}(\Phi,R) \cdot C^{-1}=E_{\ad}(\Phi,R).
$$

If $J$ is the maximal ideal (radical) of $R$, then the matrices from
$M_n(J)$ form the radical of the matrix ring $M_n(R)$, therefore
$$
C\cdot M_n(J)\cdot C^{-1}=M_n(J),
$$
consequently
$$
C\cdot (E+M_n(J))\cdot C^{-1}=E+M_n(J),
$$
so
$$
C\cdot E_{\ad}(\Phi,R,J)\cdot C^{-1}=E_{\ad}(\Phi,R,J),
$$
since $E_{\ad}(\Phi,R,J)=E_{\ad}(\Phi,R)\cap (E+M_n(J)).$

Therefore the image $\overline C$ of~$C$ under factorization of the
ring $R$ by its radical~$J$ gives us an   automorphism--conjugation of
the Chevalley group $E_{\ad} (\Phi,k)$, where $k=R/J$ is a residue
field of~$R$.

\begin{lemma}\label{fromAnton}
If $E_{\ad}(\Phi,k)$ is a Chevalley group of type $A_l$, $D_l$, or $E_l$, $l>1$, over a field $k$, then every its automorphism--conjugation  is a composition of inner and graph automorphisms.
\end{lemma}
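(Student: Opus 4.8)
The plan is to reduce the lemma to the classification of abstract automorphisms of Chevalley groups over a field and then to discard every component incompatible with a \emph{linear} conjugation. By the theorems of Steinberg and Humphreys (\cite{Stb1},~\cite{H}) every automorphism $\sigma$ of $E_{\ad}(\Phi,k)$ is a composition $\sigma=i_g\,\delta\,\rho$ of an inner, a graph, and a ring (field) automorphism, the possible diagonal component being already absorbed: the diagonal automorphisms are the conjugations by the elements $h(\chi)$ with $\chi\in\Hom(\Lambda_{ad},k^*)=T_{\ad}(\Phi,k)$, and these $h(\chi)$ lie in the adjoint Chevalley group $G_{\ad}(\Phi,k)$ itself, so they are inner in the sense of the present paper; the centre of the adjoint group is trivial, so no central component occurs. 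Thus it suffices to treat $\sigma=i_g\,\delta\,\rho$.

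Now let $\sigma=i_C$ be an automorphism--conjugation, $\sigma(x)=CxC^{-1}$ with a fixed $C\in\GL(V)$ normalizing $E_{\ad}(\Phi,k)$. The inner and graph factors are themselves realized by conjugations by fixed matrices of $\GL(V)$: for $i_g$ this matrix is the image $\pi(g)$ of $g$ in the adjoint representation, while a graph automorphism $\delta$ merely permutes the Chevalley basis vectors up to signs and so is conjugation by a fixed monomial matrix $M_\delta$. Composing the identity $i_C=i_g\,\delta\,\rho$ on the left with $\delta^{-1}i_g^{-1}$ I would conclude that $\rho$ is again conjugation by a single matrix $C'=M_\delta^{-1}\pi(g)^{-1}C\in\GL(V)$. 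Hence the whole lemma comes down to the assertion that a ring automorphism of $E_{\ad}(\Phi,k)$ realized by a $k$-linear conjugation is trivial, and this is the heart of the argument.

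For that final step I would exploit that the representation at hand is the adjoint one, so $x_\alpha(t)=\exp\bigl(t\,\ad x_\alpha\bigr)$. The relation $C'x_\alpha(t)C'^{-1}=\rho(x_\alpha(t))=x_\alpha(\rho(t))$ then reads $\exp\bigl(t\,C'(\ad x_\alpha)C'^{-1}\bigr)=\exp\bigl(\rho(t)\,\ad x_\alpha\bigr)$ for all $t\in k$. Because $\Phi$ is simply laced one has $(\ad x_\alpha)^3=0$, and since $1/2\in k$ the exponential is invertible on such nilpotents (with inverse $U\mapsto (U-1)-\tfrac12(U-1)^2$), hence injective; putting $t=1$ and using $\rho(1)=1$ gives $C'(\ad x_\alpha)C'^{-1}=\ad x_\alpha$, and then the same injectivity turns the displayed relation into $t\,\ad x_\alpha=\rho(t)\,\ad x_\alpha$. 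As $\ad x_\alpha\neq 0$ this forces $\rho(t)=t$ for every $t$, so $\rho=\mathrm{id}$ and $\sigma=i_g\,\delta$, as required. The main obstacle is exactly this last step: the field classification leaves a potentially \emph{semilinear} field component, and one must show it cannot coexist with a genuinely \emph{linear} conjugation --- the exponential structure of the adjoint representation, together with the hypothesis $1/2\in k$, is precisely what rules it out.
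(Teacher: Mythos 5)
Your proposal is correct and follows essentially the same route as the paper: decompose $i_C$ via Steinberg's theorem into inner, graph and ring factors (with the diagonal part absorbed into the inner one), realize the graph and inner factors by explicit matrices in $\GL(V)$, and show that the residual ring automorphism, being realized by a linear conjugation $i_{C'}$, must be trivial. The only divergence is in that last step --- the paper deduces from $\rho(x_\alpha(1))=x_\alpha(1)$ that $C'$ commutes with every $x_\alpha(1)$ and is therefore scalar, whereas you invert the truncated exponential to get $\rho(t)=t$ directly, which requires $\mathrm{char}\,k\neq 2$ (harmless here, since $k$ is the residue field of a local ring containing $1/2$, but note the lemma is stated for an arbitrary field and the paper's version avoids that hypothesis).
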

\begin{proof}
By Theorem~30 from~\cite{Steinberg} every automorphism of a Chevalley group with any root system under consideration over a field~$k$ is standard, i.\,e.,  it is a composition of inner, ring and graph automorphisms. Suppose that a matrix~$C$ is from normalizer of $E_{\ad}(\Phi,k)$ in $\GL_{n}(k)$. Then $i_C$ is an automorphism of $E_{\ad}(\Phi,k)$, so we have $i_C=i_g\circ \delta \circ \rho$, $g\in E_{\ad}(\Phi,k)$, $\delta$ is a graph automorphism, $\rho$ is a ring automorphism. Taking a matrix $A_\delta=\sum_{\alpha\in \Phi} \pm e_{\delta(\alpha),\alpha}+\sum_{\alpha\in \Delta} \pm e_{H_{\delta(\alpha)}, H_\alpha}$, we induce a graph automorphism $\delta$. Consequently, $i_{A_{\delta}^{-1}} i_{g^{-1}}i_C=i_{C'}=\rho$ and some matrix $C'\in \GL_{n}(k)$ defines a ring automorphism~$\rho$. For every root $\alpha\in \Phi$ we have $\rho(x_\alpha(1))=x_\alpha(1)$, therefore $C'x_\alpha(1)=x_\alpha(1) C'$ for all $\alpha\in \Phi$. Thus we have that $C'$ is scalar and an automorphism $i_C$ is a composition of inner and graph automorphisms.
\end{proof}

By Lemma~\ref{fromAnton}
$$
i_{\overline C}=i_{A_\delta}i_g,\quad  g\in E_{\ad}(\Phi,k).
$$

The matrix $A_\delta$ consists of $0$ and  $\pm 1$, therefore it can be
considered as a matrix from the group $\GL_n(R)$.

Since over a field every element of the Chevalley group is a product of some element from elementary subgroup (that is generated by unipotents $x_\alpha(t)$)
and some torus element, then the matrix $g$ can be decomposed to the product
$t_{\alpha_1}(X_1)\dots t_{\alpha_l}(X_l) x_{\alpha_{i_1}}(Y_1)\dots
x_{i_N}(Y_N)$, where $X_1,\dots,X_l,Y_1,\dots, Y_N\in k$,
$t_{\alpha_k}(X)$ is a torus element, corresponding to a
homomorphism~$\chi$ such that $\chi(\alpha_k)=X$, $\chi(\alpha_j)=1$
for $j\ne k$, $1\leqslant j\leqslant l$.

Since every element $X_1,\dots, X_l, Y_1,\dots, Y_N$ is a residue
class in the ring~$R$, we can choose (by arbitrary way) elements
$x_1\in X_1$, \dots, $x_l\in X_l$, $y_1\in Y_1$, \dots, $y_N\in
Y_n$, and the element
$$
g'=t_{\alpha_1}(x_1)\dots t_{\alpha_l}(x_l)
x_{\alpha_{i_1}}(y_1)\dots x_{i_N}(y_N)
$$
satisfies the conditions $g'\in G_{\ad}(\Phi,R)$ and $\overline
{g'}=g$.

Consider the matrix $C'={g'}^{-1}\circ {A_\delta}^{-1}\circ C$. This matrix
also normalizes an elementary Chevalley group $E_{\ad}(\Phi,R)$,
moreover $\overline {C'}=E$. Therefore we reduce the description of
matrices from the normalizer of $E_{\ad}(\Phi,R)$ to the description
of matrices from this normalizer, equivalent to~$E$ modulo~$J$.

Consequently we can suppose that our initial matrix $C$ is
equivalent to~$E$ modulo~$J$.

We want to show that $C\in G_{\ad}(\Phi,R)$.

At first we want to prove one technical lemma.

\begin{lemma}\label{prod2}
Let $X=\lambda t_{\alpha_1}(s_1)\dots t_{\alpha_l}(s_l)x_{\alpha_1}(t_1)\dots x_{\alpha_m}(t_m)x_{-\alpha_1}(u_1)\dots x_{-\alpha_m}(u_m)\in \lambda E_{\ad}(\Phi,R,J)$.
Then the matrix $X$ has such $n+1$ coefficients \emph{(}precisely described in the proof of this lemma\emph{)} that uniquely define elements $\lambda$, $s_1,s_2$, $t_1,\dots,t_m$, $u_1,\dots, u_m$.
\end{lemma}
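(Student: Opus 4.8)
The plan is to read the $n+1$ parameters off $n+1$ carefully chosen entries of $X$ by a triangular elimination, the triangularity coming from the weight grading of the adjoint representation. First I record the ranges of the parameters. Since $X\in\lambda E_{\ad}(\Phi,R,J)$, we have $X/\lambda\equiv E\pmod J$; reducing the equality $X/\lambda=t\,u_+u_-$ modulo the radical $J$ (here $t=\prod_k t_{\alpha_k}(s_k)$, $u_+=\prod_j x_{\alpha_j}(t_j)$, $u_-=\prod_j x_{-\alpha_j}(u_j)$) gives $\bar t\,\bar u_+\bar u_-=E$ over the residue field $k$. The product map $T\times U^+\times U^-\to G$ is injective: if $t\,u_+u_-=t'u'_+u'_-$ then, using $U^+\cap U^-=\{E\}$, $T\cap U^+=\{E\}$ and $(TU^+)\cap U^-=\{E\}$ (all three hold over any ring, because $U^\pm$ are strictly upper/lower unitriangular and $T$ diagonal with respect to the ordering of the weight basis by height), one gets $t=t'$, $u_+=u'_+$, $u_-=u'_-$, and within each factor the one-parameter coordinates are unique. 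Hence $\bar s_k=1$, $\bar t_j=\bar u_j=0$, i.e. $\lambda\in R^*$, $s_k\in1+J$, $t_j,u_j\in J$. This is exactly what lets me treat $t_j,u_j$ as small and separate leading terms from higher-order interference.

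Next I isolate the scalar and the torus from extremal weights. The lowest weight vector $x_{-\theta}$ ($\theta$ the highest root) is annihilated by every lowering operator, so $u_-x_{-\theta}=x_{-\theta}$ exactly; applying $u_+$ can only raise the weight, so the coefficient of $x_{-\theta}$ in $u_+u_-x_{-\theta}$ is exactly $1$; finally $t$ multiplies it by $\chi(-\theta)=\chi(\theta)^{-1}$. Thus the diagonal entry of $X$ at $x_{-\theta}$ equals $\lambda\,\chi(\theta)^{-1}$ with no higher-order correction, and, more usefully, the whole column $X x_{-\theta}=\lambda\chi(\theta)^{-1}\,u_+x_{-\theta}$ depends only on the $t_j$ (the $u_j$ have dropped out). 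Reading this column at the sub-extremal weights $-\theta+\alpha_k$ therefore yields the simple-root coordinates $t_k$ (for the $k$ with $\theta-\alpha_k\in\Phi^+$) cleanly, the leading coefficient being a structure constant $N_{\alpha_k,-\theta}=\pm1$. Together with the symmetric analysis of the row of $X$ at the highest weight vector $x_{\theta}$ (where now $u_+$ drops out and only the $u_j$ survive) this produces a small unit-triangular subsystem from which $\lambda$ and the torus values, hence $s_1,\dots,s_l$, are determined.

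With $\lambda$, the $s_k$ and the extremal $t_j,u_j$ in hand, I extract the remaining coordinates by induction on the height of the root. I order the positive roots $\alpha_1,\dots,\alpha_m$ compatibly with height and, for each $\beta=\alpha_j$, choose an entry of $X$ joining two weight vectors whose weights differ by exactly $\beta$: for a simple root a convenient choice is the entry from $h_{i(j)}$ to $x_{\alpha_j}$, where $i(j)$ is a node adjacent to $j$ in the Dynkin diagram, so that the leading coefficient is the Cartan integer $-\langle\alpha_j,\alpha_{i(j)}^\vee\rangle=1$; for a higher root one climbs a root string by one further simple step, the leading coefficient again being a structure constant $\pm1$. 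Because $\Phi$ is simply laced all these $N_{\alpha,\beta}$ are $\pm1$ and, as the diagram is connected of rank $\geqslant2$, each node has a neighbour giving Cartan integer $-1$; hence every leading coefficient is a unit in the arbitrary local ring $R$, without any need for $1/2$. Each such entry equals (unit)$\cdot\lambda t_j$ plus a combination of the parameters of strictly smaller height already determined, so $t_j$ is recovered uniquely; the mirror-image entries (weight difference $-\beta$) recover the $u_j$ in the same way.

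The main obstacle is the bookkeeping that makes this system genuinely triangular. Writing $X=\lambda\,t\,u_+u_-$ with $u_-$ acting first, a lowering step followed by a raising step can return to the original weight, so every diagonal-like entry carries interference terms of order $J^2$ that mix the $t_j$ with the $u_j$; the whole scheme works only if the $n+1$ chosen entries are selected so that in each the new parameter occurs linearly with a unit coefficient while all interference involves parameters solved at an earlier stage of the height induction. Verifying this acyclicity for the non-simple roots of the larger systems $D_l$ and $E_l$, and checking in each case that the designated leading coefficient is indeed one of the unit values $\pm1$ described above, is the laborious heart of the argument; the global uniqueness of the $T\,U^+U^-$ decomposition noted in the first step is the conceptual guarantee that such a triangular read-off must exist, while the explicit choice of the $n+1$ entries is what upgrades it to the sharp finite statement of the lemma.
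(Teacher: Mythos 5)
Your overall strategy -- read the $n+1$ parameters off selected matrix entries, using the weight grading for triangularity and the fact that $t_j,u_j\in J$, $s_k\in 1+J$ to control interference -- is the same as the paper's, and your first paragraph (injectivity of $T\times U^+\times U^-\to G$ and the resulting parameter ranges) is fine. But the two places where you wave your hands are exactly where the content of the lemma lives, so as written this is a plan rather than a proof. First, the scalar and torus part: the lowest-weight diagonal entry gives you the single relation $d_{-\theta}=\lambda\chi(\theta)^{-1}$, which is one equation in the $l+1$ unknowns $\lambda,s_1,\dots,s_l$; you never name the other $l$ entries, and ``a small unit-triangular subsystem'' is asserted, not exhibited. (Also, with the ordering $X=\lambda t\,u_+u_-$ it is the row of $X$ at the \emph{lowest} weight vector $x_{-\theta}$ from which $u_+$ drops out, not the row at $x_\theta$; the row at the highest weight involves both $u_+$ and $u_-$.) The paper supplies the missing ingredient explicitly: for each system it constructs a chain $\gamma_1\succ\gamma_2\succ\cdots$ descending from the highest root by one simple root at a time, arranged so that the first $l$ subtracted simple roots are pairwise distinct; the $l+1$ diagonal entries $d_{-\gamma_1},\dots,d_{-\gamma_{l+1}}$ then determine $\lambda$ and all the $s_k$. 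Producing such chains (and checking their properties) is part of the proof, not an afterthought.

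Second, your height induction is not actually triangular. An entry of $X$ at a position whose weight difference is $\beta$ contains, besides $\pm t_\beta$, cross terms of the form $\pm t_{\beta'}u_{\gamma}\cdots$ with $\beta'-\gamma=\beta$ and $\mathrm{ht}(\beta')>\mathrm{ht}(\beta)$, so unknowns of strictly larger height contaminate the equation you want to solve for $t_\beta$; ``interference involves parameters solved at an earlier stage'' is false for a naive height ordering. The paper's way around this is to confine all chosen positions to the block $(-\gamma_i,-\gamma_j)$ indexed by the marked chain and, at each stage $s$, to solve a small linear system in the new unknowns whose matrix is the identity modulo $J$ (each equation has exactly one unknown with unit coefficient, all others multiplied by radical elements), which is what makes the solution exist and be unique; and even then, for $E_l$ eleven roots fail to be differences of marked roots and require separately chosen positions $(-\gamma_1,-\delta_i)$, $(-\gamma_2,-\delta_i)$. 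You explicitly defer ``verifying this acyclicity'' as ``the laborious heart of the argument'' -- but since the lemma asserts the existence of a \emph{specific} set of $n+1$ positions, that verification is the proof, and omitting it leaves a genuine gap.
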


\begin{proof}
At first we show the matrix $X$ in the case of the root system $A_2$:
\begin{multline*}
X=\lambda t_{\alpha_1}(s_1)t_{\alpha_2}(s_2)\times\\
\times x_{\alpha_1}(t_1)x_{\alpha_2}(t_2)x_{\alpha_1+\alpha_2}(t_3)x_{-\alpha_1}(u_1)x_{-\alpha_2}(u_2)x_{-\alpha_1-\alpha_2}(u_3)=\\
=\begin{pmatrix}
*& * & * & * & * & * & * & *\\
*& \frac{\lambda (1-t_2u_2)}{s_1} & * & * & * & \frac{\lambda t_2}{s_1}& * & *\\
*& * & * & * & * & * & * & *\\
*& * & * & \frac{\lambda (1-t_1u_1)}{s_2} & * &  -\frac{ \lambda t_1}{s_2}& * & *\\
*& * & * & * & * & * & * & *\\
*& -\frac{\lambda u_2}{s_1s_2} & * & \frac{\lambda u_1}{s_1 s_2} & * & \frac{\lambda}{s_1 s_2}& * & -\frac{\lambda(u_3+2u_1u_2)}{s_1s_2}\\
*& * & * & * & * & * & * & *\\
*& * & * & * & * & \lambda t_3& * & *
\end{pmatrix}.
\end{multline*}
It is clear that all numbers $\lambda, s_1,s_2,t_1,t_2,t_3,u_1,u_2,u_3$ are uniquely defined but a given matrix~$X$.

Now we come to an arbitrary root system. To do it we need for every system under consideration mark special sequences of roots.

\emph{Marked roots of the system $A_l$.}

Suppose that the root system $A_l$ has simple roots $\alpha_1,\dots, \alpha_l$. Its maximal root is $\beta_{1,l}=\alpha_1+\dots+\alpha_l$,
every root has the form $\beta_{i,j}=\alpha_i+\alpha_{i+1}+\dots+\alpha_{j-1}+\alpha_j$, where $1\leqslant i\leqslant j\leqslant l$.
In a given system we consider the sequence of roots $\beta_{1,l},\beta_{2,l},\dots, \beta_{l-1,l}, \beta_{l}$. Every root of $A_l$ is
a difference of two distinct roots of our sequence (or the member of this sequence).

\emph{Marked roots of the system $D_l$.}

Roots of the system $D_l$ have the form $\pm e_i \pm e_j$, $1\leqslant i < j \leqslant l$, $\{ e_1,\dots, e_l\}$ is an ortonormal basis of the space~$\mathbb R^l$.
Simple roots are $\alpha_1=e_1-e_2, \alpha_2=e_2-e_3,\dots, \alpha_{l-2}=e_{l-2}-e_{l-1}, \alpha_{l-1}=e_{l-1}-e_{l}, \alpha_l=e_{l-1}-e_l$.
The maximal root is $e_1+e_2=\alpha_1+2\alpha_2+2\alpha_3+\dots+2\alpha_{l-2}+\alpha_{l-1}+\alpha_l$. In a given root system we consider the
sequence
{\footnotesize
\begin{align*}
&e_1+e_2= \alpha_1+2\alpha_2+2\alpha_3+\dots+2\alpha_{l-2}+\alpha_{l-1}+\alpha_l,\\
&e_1+e_3=\alpha_1+\alpha_2+2\alpha_3+\dots+2\alpha_{l-2}+\alpha_{l-1}+\alpha_l,\\
&e_2+e_3=\alpha_2+2\alpha_3+\dots+2\alpha_{l-2}+\alpha_{l-1}+\alpha_l,\\
&e_2+e_4=\alpha_2+\alpha_3+\dots+2\alpha_{l-2}+\alpha_{l-1}+\alpha_l,\\
 &\dots\ldots\ldots\dots\dots\dots,\\
&e_2+e_{l-2}=\alpha_2+\dots+\alpha_{l-3}+2\alpha_{l-2}+\alpha_{l-1}+\alpha_l,\\
&e_2+e_{l-1}=\alpha_2+\dots+\alpha_{l-3}+\alpha_{l-2}+\alpha_{l-1}+\alpha_l,\\
&e_2-e_l=
\alpha_2+\dots+\alpha_{l-2}+\alpha_{l-1},\\
&e_2-e_{l-1}=\alpha_2+\dots+\alpha_{l-2},\\
&\dots\ldots\ldots\dots\dots\dots,\\
&e_2-e_3=\alpha_2.
\end{align*}
}

Every following element of the sequence is obtained from the previous one by subtracting some simple root. Also every root of $D_l$ is either a member of the sequence, or the difference of two roots from the sequence under consideration ($e_i-e_j=(e_1+e_i)-(e_1+e_j)$, $e_i+e_j=(e_1+e_i)-(e_1-e_j)$).

\emph{Marked roots of the system $E_l$.}

Since the root systems  $E_6$ and $E_7$ are embedded in the system $E_8$, for convenience we will consider namely this system.

 Roots of the system $E_8$ have the form $\pm e_i\pm e_j$, $1\leqslant i< j\leqslant 8$ and $\frac{1}{2}(\pm e_1 \pm e_2 \pm e_3 \pm e_4 \pm e_5 \pm e_6 \pm e_7 \pm e_8)$, the number of minuses is even.
Simple roots: $\alpha_1=e_1-e_2$, $\alpha_2=\frac{1}{2}(-e_1-e_2-e_3+e_4+e_5+e_6+e_7+e_8)$, $\alpha_3=e_2-e_3$, $\alpha_4=e_3-e_4$, $\alpha_5=e_4-e_5$, $\alpha_6=e_5-e_6$, $\alpha_7=e_6-e_7$, $\alpha_8=e_7-e_8$. The maximal root $\frac{1}{2}(e_1+e_2+e_3+e_4+e_5+e_6+e_7-e_8)=2\alpha_1+3\alpha_2+4\alpha_3+6\alpha_4+5\alpha_5+4\alpha_6+3\alpha_7+2\alpha_8$.
We consider the next system
{\scriptsize
\begin{align*}
&\frac{1}{2}(e_1+e_2+e_3+e_4+e_5+e_6+e_7-e_8)=2\alpha_1+3\alpha_2+4\alpha_3+6\alpha_4+5\alpha_5+4\alpha_6+3\alpha_7+2\alpha_8,\\
&\frac{1}{2}(e_1+e_2+e_3+e_4+e_5+e_6-e_7+e_8)=2\alpha_1+3\alpha_2+4\alpha_3+6\alpha_4+5\alpha_5+4\alpha_6+3\alpha_7+\alpha_8,\\
&\frac{1}{2}(e_1+e_2+e_3+e_4+e_5-e_6+e_7+e_8)=2\alpha_1+3\alpha_2+4\alpha_3+6\alpha_4+5\alpha_5+4\alpha_6+2\alpha_7+\alpha_8,\\
&\frac{1}{2}(e_1+e_2+e_3+e_4-e_5+e_6+e_7+e_8)=2\alpha_1+3\alpha_2+4\alpha_3+6\alpha_4+5\alpha_5+3\alpha_6+2\alpha_7+\alpha_8,\\
&\frac{1}{2}(e_1+e_2+e_3-e_4+e_5+e_6+e_7+e_8)=2\alpha_1+3\alpha_2+4\alpha_3+6\alpha_4+4\alpha_5+3\alpha_6+2\alpha_7+\alpha_8,\\
&\frac{1}{2}(e_1+e_2-e_3+e_4+e_5+e_6+e_7+e_8)=2\alpha_1+3\alpha_2+4\alpha_3+5\alpha_4+4\alpha_5+3\alpha_6+2\alpha_7+\alpha_8,\\
&e_1+e_2=2\alpha_1+2\alpha_2+4\alpha_3+5\alpha_4+4\alpha_5+3\alpha_6+2\alpha_7+\alpha_8,\\
&e_1+e_3=2\alpha_1+2\alpha_2+3\alpha_3+5\alpha_4+4\alpha_5+3\alpha_6+2\alpha_7+\alpha_8,\\
&e_2+e_3=\alpha_1+2\alpha_2+3\alpha_3+5\alpha_4+4\alpha_5+3\alpha_6+2\alpha_7+\alpha_8,\\
&e_2+e_4=\alpha_1+2\alpha_2+3\alpha_3+4\alpha_4+4\alpha_5+3\alpha_6+2\alpha_7+\alpha_8,\\
&e_2+e_5=\alpha_1+2\alpha_2+3\alpha_3+4\alpha_4+3\alpha_5+3\alpha_6+2\alpha_7+\alpha_8,\\
&e_3+e_5=\alpha_1+2\alpha_2+2\alpha_3+4\alpha_4+3\alpha_5+3\alpha_6+2\alpha_7+\alpha_8,\\
&e_4+e_5=\alpha_1+2\alpha_2+2\alpha_3+3\alpha_4+3\alpha_5+3\alpha_6+2\alpha_7+\alpha_8,
\end{align*}

\begin{align*}
&\frac{1}{2}(e_1+e_2+e_3+e_4+e_5-e_6-e_7-e_8)=\alpha_1+\alpha_2+2\alpha_3+3\alpha_4+3\alpha_5+3\alpha_6+2\alpha_7+\alpha_8,\\
&\frac{1}{2}(e_1+e_2+e_3+e_4-e_5+e_6-e_7-e_8)=\alpha_1+\alpha_2+2\alpha_3+3\alpha_4+3\alpha_5+2\alpha_6+2\alpha_7+\alpha_8,\\
&\frac{1}{2}(e_1+e_2+e_3-e_4+e_5+e_6-e_7-e_8)=\alpha_1+\alpha_2+2\alpha_3+3\alpha_4+2\alpha_5+2\alpha_6+2\alpha_7+\alpha_8,\\
&\frac{1}{2}(e_1+e_2-e_3+e_4+e_5+e_6-e_7-e_8)=\alpha_1+\alpha_2+2\alpha_3+2\alpha_4+2\alpha_5+2\alpha_6+2\alpha_7+\alpha_8,\\
&\frac{1}{2}(e_1-e_2+e_3+e_4+e_5+e_6-e_7-e_8)=\alpha_1+\alpha_2+\alpha_3+2\alpha_4+2\alpha_5+2\alpha_6+2\alpha_7+\alpha_8,\\
&\frac{1}{2}(e_1-e_2+e_3+e_4+e_5-e_6+e_7-e_8)=\alpha_1+\alpha_2+\alpha_3+2\alpha_4+2\alpha_5+2\alpha_6+\alpha_7+\alpha_8,\\
&\frac{1}{2}(e_1-e_2+e_3+e_4-e_5+e_6+e_7-e_8)=\alpha_1+\alpha_2+\alpha_3+2\alpha_4+2\alpha_5+\alpha_6+\alpha_7+\alpha_8,\\
&\frac{1}{2}(e_1-e_2+e_3-e_4+e_5+e_6+e_7-e_8)=\alpha_1+\alpha_2+\alpha_3+2\alpha_4+\alpha_5+\alpha_6+\alpha_7+\alpha_8,\\
&\frac{1}{2}(e_1-e_2-e_3+e_4+e_5+e_6+e_7-e_8)=\alpha_1+\alpha_2+\alpha_3+\alpha_4+\alpha_5+\alpha_6+\alpha_7+\alpha_8,\\
&e_1-e_8=\alpha_1+\alpha_3+\alpha_4+\alpha_5+\alpha_6+\alpha_7+\alpha_8,\\
&e_1-e_7=\alpha_1+\alpha_3+\alpha_4+\alpha_5+\alpha_6+\alpha_7,\\
&e_1-e_6=\alpha_1+\alpha_3+\alpha_4+\alpha_5+\alpha_6,\\
&e_1-e_5=\alpha_1+\alpha_3+\alpha_4+\alpha_5,\\
&e_1-e_4=\alpha_1+\alpha_3+\alpha_4,\\
&e_1-e_3=\alpha_1+\alpha_3,\\
&e_1-e_2=\alpha_1.
\end{align*}
}

Every root of $E_8$ is either a member of the sequence, or is the difference between some roots of thence (it is checked by direct calculations), except the following roots:
{\footnotesize
\begin{align*}
&\frac{1}{2}(e_1-e_2-e_3+e_4+e_5+e_6-e_7+e_8)=\alpha_1+\alpha_2+\alpha_3+\alpha_4+\alpha_5+\alpha_6+\alpha_7,\\
&\frac{1}{2}(e_1-e_2+e_3-e_4+e_5+e_6-e_7+e_8)=\alpha_1+\alpha_2+\alpha_3+2\alpha_4+\alpha_5+\alpha_6+\alpha_7,\\
&\frac{1}{2}(e_1-e_2+e_3+e_4+e_5-e_6-e_7+e_8)=\alpha_1+\alpha_2+\alpha_3+2\alpha_4+2\alpha_5+2\alpha_6+\alpha_7,\\
&\frac{1}{2}(e_1+e_2-e_3-e_4+e_5+e_6+e_7-e_8)=\alpha_1+\alpha_2+2\alpha_3+2\alpha_4+\alpha_5+\alpha_6+\alpha_7+\alpha_8,\\
&\frac{1}{2}(e_1+e_2-e_3+e_4+e_5-e_6+e_7-e_8)=\alpha_1+\alpha_2+2\alpha_3+2\alpha_4+2\alpha_5+2\alpha_6+\alpha_7+\alpha_8,\\
&\frac{1}{2}(e_1+e_2+e_3-e_4+e_5-e_6+e_7-e_8)=\alpha_1+\alpha_2+2\alpha_3+3\alpha_4+2\alpha_5+2\alpha_6+\alpha_7+\alpha_8,\\
&\frac{1}{2}(e_1+e_2+e_3+e_4-e_5-e_6+e_7-e_8)=\alpha_1+\alpha_2+2\alpha_3+3\alpha_4+3\alpha_5+2\alpha_6+\alpha_7+\alpha_8,\\
&e_1+e_8=2\alpha_1+2\alpha_2+3\alpha_3+4\alpha_4+3\alpha_5+2\alpha_6+\alpha_7,\\
&e_1+e_7=2\alpha_1+2\alpha_2+3\alpha_3+4\alpha_4+3\alpha_5+2\alpha_6+\alpha_7+\alpha_8,\\
&e_1+e_6=2\alpha_1+2\alpha_2+3\alpha_3+4\alpha_4+3\alpha_5+2\alpha_6+2\alpha_7+\alpha_8,\\
&\frac{1}{2}(e_1-e_2+e_3+e_4+e_5+e_6+e_7+e_8)=2\alpha_1+3\alpha_2+3\alpha_3+5\alpha_4+4\alpha_5+3\alpha_6+2\alpha_7+\alpha_8.
\end{align*}
}

Therefore we can suppose that for all root systems under consideration we marked a sequence $\gamma_1,\gamma_2,\dots
\gamma_k$ of positive roots with following properties:

1. $\gamma_1$ is the maximal root of the system.

2. $\gamma_k$ is a simple root.

3. Every root is obtained from the previous one by subtracting some simple root, and  the first $l$ subtracted simple roots are different.

4. All roots in the systems $A_l,D_l$ are either members of the corresponding sequences, or differences of some two roots from these sequences.
For the systems $E_l$ it holds with some exceptions from the separate special list.

Consider in the matrix $X$ the position $(\mu,\nu)$, $\mu,\nu\in \Phi$.

To find an element on this position in the matrix we need to write all sequences of roots $\beta_1, \dots, \beta_p$ with two following properties:

1. $\mu + \beta_1\in \Phi$, $\mu +\beta_1+\beta_2\in \Phi$, \dots, $\mu+\beta_1+\dots+\beta_i\in \Phi$, \dots,
$\mu+\beta_1+\dots+\beta_p=\nu$.

2. In our initial numerated sequence  $\alpha_1,\dots,\alpha_m, -\alpha_1,\dots, -\alpha_m$ roots $\beta_1,\dots,\beta_p$ are staying strictly from right to left.

Finally, in the matrix $X$ on the position $(\mu,\nu)$ there is sum of all products $\pm \beta_1\cdot\beta_2\dots \beta_p$ by all sequences of roots with these two properties, multiplied by $d_\mu=\lambda s_1^{\langle \alpha_1,\mu\rangle}\dots s_l^{\langle \alpha_l,\mu\rangle}$. If $\mu=\nu$, then we must add~$1$ to the sum.

We shall find the obtained numbers $\lambda, s_1,\dots,s_l,t_1,\dots, t_m,u_1,\dots,u_m$ step by step.

At first we consider in the matrix $X$ the position $(-\gamma_1,-\gamma_1)$. We can not add to the root $-\gamma_1$ any negative root so that in the results we get a root again. If in the sequence $\beta_1,\dots, \beta_p$ the first root is positive, then all other roots have to be positive. Therefore, on the place $(-\gamma_1,-\gamma_1)$ there is just $d_{-\gamma_1}$, and we can know it now. Now consider the place $(-\gamma_1,-\gamma_2)$.  By the same reason  the obtained sequence is only  $\alpha_{k_1}=\gamma_1-\gamma_2$, i.\,e., some simple root $\alpha_{k_1}$. Thus, it is $\pm d_{-\gamma_1} t_{k_1}$ on this place. So we also have found $t_{k_1}$.
If we consider the positions  $(-\gamma_2,-\gamma_2)$ and $(-\gamma_2,-\gamma_1)$, we can see that by similar reasons there are $d_{-\gamma_2}(1\pm u_{k_1}t_{k_1}$ and $\pm d_{-\gamma_2} u_{k_1}$, respectively. So we know $d_{-\gamma_2}$ and $u_{k_1}$.

Now we come to the second step.
From above arguments in the matrix $X$ on the place $(-\gamma_2,-\gamma_3)$ there is $d_{-\gamma_2}(\pm t_{k_2}\pm u_{k_1}t_{k_{1,2}})$, where $\alpha_{k_2}=\gamma_2-\gamma_3$,
$\alpha_{k_{1,2}}=\alpha_{k_1}+\alpha_{k_2}$; on the place $(-\gamma_3,-\gamma_2)$ there is $d_{-\gamma_3}(\pm u_{k_2}\pm u_{k_{1,2}}t_{k_{1}})$; on the place $(-\gamma_1,-\gamma_3)$ there is $d_{-\gamma_1}(\pm t_{k_{1,2}}\pm t_{k_{1}}t_{k_{2}})$  (the second summand can absent, if $k_1< k_2$); on the place $(-\gamma_3,-\gamma_1)$ there is $d_{-\gamma_3}(\pm u_{k_{1,2}}\pm u_{k_{2}}u_{k_{1}})$ (the second summand can absent, if $k_2< k_1$), finally, on the place $(-\gamma_3,-\gamma_3)$ there is $d_{-\gamma_3}(1\pm u_{k_{1,2}}t_{k_{1,2}}\pm u_{k_{1}}t_{k_{1}})$. From these five equations with five variables (from radical) we uniquely can define values of variables, after that we suppose that $d_{-\gamma_1}, d_{-\gamma_2}, d_{-\gamma_3}, t_1, t_2, t_{1,2},u_1, u_2,u_{1,2}$ are known.

Suppose now that we know  numbers $t_i, u_j$ for all indices corresponding to the roots of the form $\gamma_p-\gamma_q$, $1\leqslant p,q< s$, all $d_{-\gamma_r}$, $1\leqslant r<s$, and also $s\leqslant l+1$.
Consider the positions $(-\gamma_1,-\gamma_s)$, $(-\gamma_s,-\gamma_1)$, $(-\gamma_2,-\gamma_s)$, $(-\gamma_s,-\gamma_2)$, \dots, $(-\gamma_{s-1},-\gamma_s)$,
$(-\gamma_s,-\gamma_{s-1})$ and $(-\gamma_s,-\gamma_{s})$ in the matrix~$X$. Clear that on every position $(-\gamma_i,-\gamma_s)$, $1\leqslant i<s$, we have the sum of the number $t_p$ (where $p$ is the number of the root $\gamma_i-\gamma_s$, if it is a root) and products of different numbers $t_a,u_b$, where only one number in the product is not known yet, and all others numbers are known, all of them are from the radical, multiplied by $d_{-\gamma_i}$. The same picture  is on the positions $(-\gamma_s,-\gamma_i)$, $1\leqslant i<s$, but there the single element (without multipliers) is not $t_p$, but $u_p$. On the last place there is $d_{-\gamma_s}(1+\Sigma)$, where $\Sigma$ is also a sum of described type. Therefore we have the  number  of  (not uniform) linear equations, greater to~$1$ than the number of roots $\pm (\gamma_i-\gamma_s)$, with the same number of variables, in every equation precisely one variable has an invertible coefficient, other coefficients are from the radical, for different equations such variable are different. Clear that such system of equation has the solution, and it it unique. Consequently, we have made the induction step and now we know the numbers $t_i, u_j$ for all indices corresponding to the roots of the form $\gamma_p-\gamma_q$, $1\leqslant p,q\leqslant s$, and also $d_{-\gamma_s}$.

 After the $l+1$-th step ($s=l+1$) we know all $d_{-\gamma_1},\dots,d_{-\gamma_{l+1}}$, that uniquely define $\lambda, s_1,\dots, s_l$. After that we know all $d_{-\gamma_i}$, $l+1<i\leqslant \gamma_k$.
 On following steps we do not consider the last position of the form $(-\gamma_i,-\gamma_{i})$,  the  number of equations and variables decreases to one.

On the last step we know numbers $t_i, u_j$ for all indices corresponding to the roots of the form $\gamma_p-\gamma_q$, $1\leqslant p,q
\leqslant k$. Consider now in the matrix $X$ the positions $(-\gamma_1,h_{\gamma_1})$, $(h_{\gamma_1},-\gamma_1)$, $(-\gamma_2,h_{\gamma_2})$, $(h_{\gamma_2},-\gamma_2)$, \dots, $(-\gamma_k,h_{\gamma_k})$, $(h_{\gamma_k},-\gamma_k)$. Completely similar to the previous arguments we can find all coefficients  $t$ and $u$, corresponding to the roots $\pm \gamma_1,\dots, \pm \gamma_k$.

After that in the systems $A_l$ and $D_l$ we know all coefficients in the product;  moreover, in the proof of the lemma we have described the correspondence between them and positions of the matrix~$X$.

In the systems $E_l$ there are some roots with unknown coefficients. Let us show what to do with it on the system~$E_8$.
Order the excepted roots by its height and call them $\beta_1,\dots, \beta_{11}$. Note that $\delta_1:=\gamma_2-\beta_1\in \Phi, \gamma_1-\beta_1\notin \Phi$,
$\delta_2:=\gamma_2-\beta_2\in \Phi, \gamma_1-\beta_2\notin \Phi$, $\delta_3:=\gamma_2-\beta_3\in \Phi, \gamma_1-\beta_3\notin \Phi$, $\delta_4:=\gamma_1-\beta_4\in \Phi$,\dots, $\delta_7:=\gamma_1-\beta_7\in \Phi$, $\delta_8:=\gamma_2-\beta_8\in \Phi, \gamma_1-\beta_8\notin \Phi$, $\delta_9:=\gamma_1-\beta_9\in \Phi$, $\delta_{10}:=\gamma_1-\beta_{10}\in \Phi$, $\delta_{11}:=\gamma_2-\beta_{11}\in \Phi, \gamma_1-\beta_{11}\notin \Phi$.

Let us start with the root $\beta_1$. Consider in the matrix $X$ the position $(-\gamma_2,-\delta_1)$. On this position in the matrix there is a sum $t_p$, corresponding to the root~$\beta_1$, and products of numbers $t_i,u_j$, corresponding to the roots of smaller height. Since for all heights smaller than the height of $\beta_1$ we already know coefficients $t,u$, we can find the obtained coefficient  $t_p$ directly. Similarly we find a coefficient~$u_p$, considering the position $(-\delta_1,-\gamma_2)$.

Now all coefficients for all roots of heights smaller than the height of $\beta_2$, are found. According to it we can completely repeat the previous arguments with the positions  $(-\gamma_2,-\delta_2)$ and $(-\delta_2,-\gamma_2)$. Then we act similarly but for the roots $\beta_4,\beta_5,\beta_6,\beta_7.\beta_9,\beta_{10}$ in the matrix $X$ we consider not positions $(-\gamma_2,-\delta_i)$ and $(-\delta_i,-\gamma_2)$, but
$(-\gamma_1,-\delta_i)$ and $(-\delta_i,-\gamma_1)$.

Therefore lemma is completely proved.
\end{proof}

Now we return to our main proof.
Recall that we work with the matrix~$C$, equivalent to the unit matrix modulo radical, and mapping elementary Chevalley group to itself.

For every root $\alpha\in\Phi$ we have the equation
\begin{equation}\label{osn_eq}
C x_{\alpha}(1)C^{-1}=x_{\alpha}(1)\cdot g_\alpha,\quad g_\alpha\in
G_{\ad}(\Phi,R,J).
\end{equation}
Every element $g_\alpha \in G_{\ad}(\Phi,R,J)$ can be decomposed in the product
\begin{equation}\label{razl_rad}
 t_{\alpha_1}(1+a_1)\dots
t_{\alpha_l}(1+a_l)x_{\alpha_1}(b_1)\dots
x_{\alpha_m}(b_m)x_{\alpha_{-1}}(c_1)\dots x_{\alpha_{-m}}(c_m),
\end{equation}
where $a_1,\dots,a_l,b_1,\dots,b_m,c_1,\dots, c_m\in J$ (see,
for example,~\cite{Abe1}).

Let $C=E+X=E+(x_{i,j})$. Then for every root~$\alpha\in \Phi$
we can write a matrix equation~\ref{osn_eq} with variables $x_{i,j},
a_1,\dots,a_l,b_1,\dots,b_m,c_1,\dots, c_m$, every of them is from the radical.

We change these equations.
We consider the matrix~$C$ and ``imagine'', that it is a matrix from Lemma~\ref{prod2}. Then by some its concrete $n+1$~positions we can  ``reproduce'' all coefficients $\lambda, s_1,\dots,s_l, t_1,\dots,t_m,u_1,\dots, u_m$ in decomposition of matrix in the product from Lemma~\ref{prod2}. In the result we obtain some matrix $D$ from
elementary Chevalley group, every its coefficient is some (known) function of coefficients of the matrix~$C$.
Change now the equations~\eqref{osn_eq} to the equations
\begin{equation}\label{fol_eq}
D^{-1}C x_{\alpha}(1)C^{-1}D=x_{\alpha}(1)\cdot {g_\alpha}',\quad {g_\alpha}'\in
G_{\ad}(\Phi,R,J).
\end{equation}
We again get matrix equations but with variables $y_{i,j},
a_1',\dots,a_l',b_1',\dots,b_m',c_1',\dots, c_m'$, every of them also is from the radical, and every $y_{p,q}$ is some known function from  $x_{i,j}$. We denote the matrix $D^{-1}C$ by~$C'$.

We need to show that a solution exists only if all variables with  primes are zeros. Some $x_{i,j}$ also have to be zeros, and others disappear in equations. Since the equations are too complicated we shall consider the linearized system of equations. It is sufficient to show that all variables that do not disappear in the linearized system (suppose that there are  $q$~such variables) are members of some subsystem consisting from $q$ inear equations with an invertible (in the ring~$R$) determinant.

in other words, from the matrix equations we shall show step by step that all variables are zeros.

Clear that linearization of the product $Y^{-1}(E+X)$ gives some matrix $E+(z_{i,j})$, that have zeros on all positions described in Lemma~\ref{prod2}.

To find the final form of the linearized system we write it as follows:
\begin{multline*}
(E+Z)x_\alpha(1) =x_\alpha(1)(E+a_1T_1+a_1^2\dots)\dots
(E+a_lT_l+a_1^2\dots)\cdot\\
\cdot(E+b_1X_{\alpha_1}+b_1^2X_{\alpha_1}^2/2)\dots
(E+c_mX_{-\alpha_1}+c_m^2X_{-\alpha_m}^2/2)(E+Z),
\end{multline*}
where $X_\alpha$ the corresponding Lie algebra element in the adjoint representation, the matrix $T_i$ is diagonal, has on the diagonal~$p$ at the place corresponding to the vector $v_k$ iff in decomposition of $\alpha_k$ in the sum of simple roots the root
 $\alpha_i$ entries in this decomposition  $p$~times ($p$
can be zero or negative); on the places corresponding to the basis vectors  $V_j$, the matrix has zeros on the diagonal.

Then the linearized system is
$$
Zx_{\alpha}(1)-x_{\alpha}(1)(Z+a_1T_1+\dots+a_lT_l+b_1X_{\alpha_1}+\dots+c_mX_{\alpha_m})=0.
$$
Clear that for simple roots $\alpha_i$ the summand $b_i X_{\alpha_i}$ is absent; besides, $2m$ fixed elements of~$Z$ are zeros.
This equation can be written for every $\alpha\in
\Phi$ (naturally, with other  $a_j, b_j, c_j$), and can be written only for the roots generating the Chevalley groups, i.\,e., for $\alpha_1,\dots,
\alpha_l, -\alpha_1, \dots, -\alpha_l$. The number free variables is not changed.

In the next section we prove the obtained equation for root systems $A_2$, and latter we shall give the prove for all systems under consideration.

\section{Linear systems in the case $A_2$.}

In this case we have four conditions:
$$
\begin{cases}
Zx_{\alpha_1}(1)-x_{\alpha_1}(1)(X+a_{1,1}T_1+a_{2,1}T_2+\\
\ \ \ \ \
+b_{1,1}X_{\alpha_1}+b_{2,1}X_{\alpha_2}+b_{3,1}X_{\alpha_1+\alpha_2}
+c_{1,1}X_{-\alpha_1}+c_{2,1}X_{-\alpha_2}+c_{3,1}X_{-\alpha_1-\alpha_2})=0;\\
Zx_{\alpha_2}(1)-x_{\alpha_2}(1)(X+a_{1,2}T_1+a_{2,2}T_2+\\
\ \ \ \ \ +b_{1,2}X_{\alpha_1}+b_{2,2}X_{\alpha_2}+b_{3,2}X_{\alpha_1+\alpha_2}
+c_{1,2}X_{-\alpha_1}+c_{2,2}X_{-\alpha_2}+c_{3,2}X_{-\alpha_1-\alpha_2})=0;\\
Xx_{-\alpha_1}(1)-x_{-\alpha_1}(1)(X+a_{1,3}T_1+a_{2,3}T_2+\\
\ \ \ \ \
+b_{1,3}X_{\alpha_1}+b_{2,3}X_{\alpha_2}+b_{3,3}X_{\alpha_1+\alpha_2}
+c_{1,3}X_{-\alpha_1}+c_{2,3}X_{-\alpha_2}+c_{3,3}X_{-\alpha_1-\alpha_2})=0;\\
Xx_{-\alpha_2}(1)-x_{-\alpha_2}(1)(X+a_{1,4}T_1+a_{2,4}T_2+\\
\ \ \ \ \
+b_{1,4}X_{\alpha_1}+b_{2,4}X_{\alpha_2}+b_{3,4}X_{\alpha_1+\alpha_2}
+c_{1,4}X_{-\alpha_1}+c_{2,4}X_{-\alpha_2}+c_{3,4}X_{-\alpha_1-\alpha_2})=0.
\end{cases}
$$
Also
$$
\begin{cases}
T_1= e_{1,1}-e_{2,2}+e_{5,5}-e_{6,6};\\
T_2= e_{3,3}-e_{4,4}+e_{5,5}-e_{6,6};\\
X_{\alpha_1}= -2e_{1,7}+e_{1,8}+e_{4,6}-e_{3,5}+e_{7,2};\\
X_{\alpha_2}=e_{3,7}-2e_{3,8}+e_{2,6}-e_{5,1}+e_{8,4};\\
X_{\alpha_1+\alpha_2}=
-e_{5,7}-e_{5,8}-e_{1,4}+e_{3,2}+e_{7,6}+e_{8,6};\\
X_{-\alpha_1}=-2e_{2,7}+e_{2,8}+e_{3,5}-e_{6,4}+e_{7,1};\\
X_{-\alpha_2}=e_{4,7}-2e_{4,8}-e_{1,5}+e_{6,2}+e_{8,3};\\
X_{-\alpha_1-\alpha_2}=-e_{6,7}-e_{6,8}-e_{2,3}+e_{4,1}+e_{7,5}+e_{8,5}.
\end{cases}
$$

The matrix $Z$ is
$$
\begin{pmatrix}
z_{11}& z_{12} & z_{13} & z_{14} & z_{15} & z_{16} & z_{17} & z_{18}\\
z_{21}& 0 & z_{23} & z_{24} & z_{25} & 0& z_{27} & z_{28}\\
z_{31}& z_{32} & z_{33} & z_{34} & z_{35} & z_{36} & z_{37} & z_{38}\\
z_{41}& z_{42} & z_{43} & 0 & z_{45} & 0& z_{47} & z_{48}\\
z_{51}& z_{52} & z_{53} & z_{54} & z_{55} & z_{56} & z_{57} & z_{58}\\
z_{61}& 0 & z_{63} & 0 & z_{65} & 0& z_{67} & 0\\
z_{71}& z_{72} & z_{73} & z_{74} & z_{75} & z_{76} & z_{77} & z_{78}\\
z_{81}& z_{82} & z_{83} & z_{84} & z_{85} & 0& z_{87} & z_{88}
\end{pmatrix}.
$$

Directly writing all equations of the system and solving them by steps, we obtain that all variables  $a_i,b_i,c_i$ are zero, and the matrix $Z$ is scalar.

Therefore for the system  $A_2$ theorem is proved.

\section{Linear systems in other cases.}

Suppose now that we deal with  arbitrary root system (from the types under consideration) of rank $>2$. We  consider $2l$ conditions in this situation (just for simple roots and opposite to them):
$$
\begin{cases}
Zx_{\alpha_1}(1)-x_{\alpha_1}(1)(Z+a_{1,1}T_1+\dots +a_{l,1}T_l+\\
\ \ \ \ \
+b_{1,1}X_{\alpha_1}+\dots+b_{m,1}X_{\alpha_m}
+c_{1,1}X_{-\alpha_1}+\dots+c_{m,1}X_{-\alpha_m})=0;\\
\dots\\
Zx_{\alpha_l}(1)-x_{\alpha_l}(1)(Z+a_{1,l}T_1+\dots +a_{l,l}T_l+\\
\ \ \ \ \
+b_{1,l}X_{\alpha_1}+\dots+X_{\alpha_m}b_{m,1}X_{\alpha_m}
+c_{1,l}X_{-\alpha_1}+\dots+c_{m,l}X_{-\alpha_m})=0;\\
Zx_{-\alpha_1}(1)-x_{-\alpha_1}(1)(Z+a_{1,l+1}T_1+\dots+a_{l,l+1}T_l+\\
\ \ \ \ \
+b_{1,l+1}X_{\alpha_1}+\dots+b_{m,l+1}X_{\alpha_m}
+c_{1,l+1}X_{-\alpha_1}+\dots+c_{m,l+1}X_{-\alpha_m})=0;\\
\dots\\
Zx_{-\alpha_l}(1)-x_{-\alpha_l}(1)(Z+a_{1,2l}T_1+\dots+a_{l,2l}T_l+\\
\ \ \ \ \
+b_{1,2l}X_{\alpha_1}+\dots+b_{m,2l}X_{\alpha_m}
+c_{1,2l}X_{-\alpha_1}+\dots+c_{m,2l}X_{-\alpha_m})=0.
\end{cases}
$$

In the case of root systems under consideration
$$
T_i=e_{\alpha_i,\alpha_i}-e_{-\alpha_i,-\alpha_i}+\sum_{j\ne i}
k_j(e_{\alpha_j,\alpha_j}-e_{-\alpha_j,-\alpha_j}),
$$
where $\alpha_j=k_j\alpha_i+\alpha$, the simple root $\alpha_i$ does not entry in decomposition of~$\alpha$ in the sum of simple roots;
$$
X_{\alpha_p}=e_{h_p,-\alpha_p}-\sum_{q=1}^l \langle
\alpha_p,\alpha_q\rangle e_{\alpha_p,h_q} +\sum_{s,t:
\alpha_p+\alpha_s=\alpha_t}
e_{-\alpha_s,-\alpha_t}-e_{\alpha_t,\alpha_s}
$$
for all $\alpha_p\in \Phi$.

Suppose that we fixed the obtained linear uniform system of equations with described $n+1$ zero positions. Recall that our aim is to show that all values  $z_{i,j}$, $a_{s,t}, b_{s,t}, c_{s,t}$ are zeros.

At first we consider the pair of equations with numbers~$1$ and~$l+1$. Clear that all other corresponding pairs have the same construction
 (since there is a substitution of basis that moves any root to any other root).

We can rename basis so that the matrices $x_{\alpha_1}(1)$ and
$x_{-\alpha_1}(1)$ have the forms of three diagonal blocks: the first one is corresponded to the basis vextors $\{ x_{\alpha_1}, x_{-\alpha_1},
h_1, h_2\}$, the second one corresponded to $\{ x_\alpha\mid \langle \alpha_1,
\alpha\rangle=\pm 1\}$, and the third corresponded to $\{ x_\alpha\mid \langle
\alpha_1,\alpha\rangle=0; h_3,\dots, h_l\}$. Also the third block is just a unit matrix. Let

$$
x_{\alpha_1}(1)=\begin{pmatrix} A& 0& 0\\
0& B& 0\\
0& 0& E
\end{pmatrix},\quad
x_{-\alpha_1}(1)=\begin{pmatrix} A'& 0& 0\\
0& B'& 0\\
0& 0& E
\end{pmatrix}
$$
and suppose that all matrices $Y$ under consideration also have the similar block form:
$$
Y=\begin{pmatrix} Y_{11}& Y_{12}& Y_{13}\\
Y_{21}& Y_{22}& Y_{23}\\
Y_{31}& Y_{32}& Y_{33}
\end{pmatrix}.
$$

Denote the matrix
$a_{1,1}T_1+a_{2,1}T_2+\dots+a_{l,1}T_l+b_{1,1}X_{\alpha_1}+
\dots+b_{m,1}X_{\alpha_m}+c_{1,1}X_{-\alpha_1}+\dots+c_{m,1}X_{-\alpha_m}$
by $D$. Note that from the basis construction  $D_{1,3}=D_{3,1}=0$.
Then the first equation is
$$
\begin{pmatrix}
Z_{11}A& Z_{12}B& Z_{13}\\
Z_{21}A& Z_{22}B& Z_{23}\\
Z_{31}A& Z_{32}B& Z_{33}
\end{pmatrix}=
\begin{pmatrix}
AZ_{11}& AZ_{12}& AZ_{13}\\
BZ_{21}& BZ_{22}& BZ_{23}\\
Z_{31}& Z_{32}& Z_{33}
\end{pmatrix}+
\begin{pmatrix}
AD_{11}& AD_{12}& 0\\
BD_{21}& BD_{22}& BD_{23}\\
0 & D_{32}& D_{33}
\end{pmatrix}.
$$
From the written equality directly follows that $D_{33}=0$.
Therefore,  $a_{3,1}=\dots=a_{l,1}=0$, $b_{k,1}=c_{k,1}=0$ for
$\langle \alpha_1, \alpha_k\rangle=0$.

 Now consider a positive  root $\alpha$ such that $\beta=\alpha+\alpha_1\in \Phi$ and  $\alpha,\beta$ are from the first $l+1$ members of the sequence $\gamma_1,\dots, \gamma_k$.
 For the root system $A_l$ it is  $\gamma_2$, for $D_l$ it is $\gamma_3$, for $E_8$ it is $\gamma_3$ (similar pairs of roots in the sequence $\gamma_1,\dots, \gamma_k$ can be found also for $\alpha_2,\dots, \alpha_l$ according to the porperty~3 of the sequence $\gamma_1,\dots, \gamma_k$).

Consider the basis part $\alpha,-\alpha,\beta,-\beta$. For the matrices $x_{\alpha_1}(1)$ and
$x_{-\alpha_1}(1)$ this part of basis is a direct summand, so that we can study it independently. By the construction
 we know that $z_{-\alpha,-\alpha}=z_{-\beta,-\beta}=z_{-\alpha,-\beta}=z_{\beta,-\alpha}=0$. So we have the condition
\begin{multline*}
\begin{pmatrix}
z_{\alpha,\alpha}& z_{\alpha,-\alpha}& z_{\alpha,\beta}& z_{\alpha,-\beta}\\
z_{-\alpha,\alpha}& 0& z_{-\alpha,\beta}& 0\\
z_{\beta,\alpha}& z_{\beta,-\alpha}& z_{\beta,\beta}& z_{\beta,-\beta}\\
z_{-\beta,\alpha}& 0& z_{-\beta,\beta}& 0
\end{pmatrix}
\begin{pmatrix}
1& 0& 0& 0\\
0& 1& 0& -1\\
1& 0& 1& 0\\
0& 0& 0& 1
\end{pmatrix}=\\ =
\begin{pmatrix}
1& 0& 0& 0\\
0& 1& 0& -1\\
1& 0& 1& 0\\
0& 0& 0& 1
\end{pmatrix}
\left(\begin{pmatrix}
z_{\alpha,\alpha}& z_{\alpha,-\alpha}& z_{\alpha,\beta}& z_{\alpha,-\beta}\\
z_{-\alpha,\alpha}& 0& z_{-\alpha,\beta}& 0\\
z_{\beta,\alpha}& z_{\beta,-\alpha}& z_{\beta,\beta}& z_{\beta,-\beta}\\
z_{-\beta,\alpha}& 0& z_{-\beta,\beta}& 0
\end{pmatrix}+
\begin{pmatrix}
a_{\alpha} & 0& -c_{1,1}& 0\\
0& -a_{\alpha}& 0& -b_{1,1}\\
b_{1,1}& 0& a_{\beta}& 0& 0\\
0& c_{1,1}& 0& -a_{\beta}
\end{pmatrix}\right) .
\end{multline*}

As the result we obtain that the following matrix must be zero:
$$
\begin{pmatrix}
z_{\alpha,\beta}-a_{\alpha}& 0& c_{1,1}& z_{\alpha,-\alpha}\\
z_{-\alpha,\beta}+z_{-\beta,\alpha}& c_{1,1}+a_{\alpha}& z_{-\beta,\beta}& b_{1,1}-a_{\beta}\\
z_{\beta,\beta}-z_{\alpha,\alpha}-a_{\alpha}-b_{1,1}& -z_{\alpha,-\alpha}& -z_{\alpha,\beta}+c_{1,1}-a_{\beta}&
-z_{\beta,-\alpha}-z_{\alpha,-\beta}\\
z_{-\beta,\beta}& -c_{1,1}& 0& a_{\beta}
\end{pmatrix}.
$$
Consequently, $a_{\alpha}=a_{\beta}=b_{1,1}=c_{1,1}=0$. Since we already know that $a_{3,1}=\dots=a_{l,1}=0$,
we have $a_{1,1}=a_{2,1}=0$.

Consider such a root $\alpha\in \Phi^+$, that $\alpha+\alpha_1\in \Phi^+$. In the sequence $\gamma_1,\dots, \gamma_k$ we find such roots $\gamma_p$ and $\gamma_q$, that $\gamma_q-\gamma_p=\alpha$. Since all roots in our systems have the same length, the reflection coefficients $\langle\ ,\ \rangle$ are bilinear up to the roots. We know that $\langle \alpha,\alpha_1\rangle =-1$, and for the numbers  $\langle \gamma_p,\alpha_1\rangle$ and $\langle \gamma_q,\alpha_1\rangle$ are just three possibilities: $0,1,-1$. So we can have only two situations, considered below:

1) the root $\gamma_p$ is orthogonal to the root~$\alpha_1$, and the root $\gamma_q$ is not orthogonal. Then $\gamma'=\gamma_q+\alpha_1$ is also a root, we can consider the basis part $\gamma_p,-\gamma_p,\gamma_q,-\gamma_q, \gamma',-\gamma'$, that is an invariant direct summand for $x_{\alpha_1}(1)$. On this basis part
$$
x_{\alpha_1}(1)=\begin{pmatrix}
1& 0& 0& 0& 0& 0\\
0& 1& 0& 0& 0& 0\\
0& 0& 1& 0& 0& 0\\
0& 0& 0& 1& 0& -1\\
0& 0& 1& 0& 1& 0\\
0& 0& 0& 0& 0& 1
\end{pmatrix},
$$
the matrix $Z$ has the positions   $z_{-\gamma_p,-\gamma_q}$ and $z_{-\gamma_q,-\gamma_p}$ equal to zero (by Lemma~\ref{prod2}), the matrix $D$ is
$$
\begin{pmatrix}
0& 0& -c_{\alpha,1}& 0& -c_{\alpha+\alpha_1,1}& 0\\
0& 0& 0& -b_{\alpha,1}& 0& -b_{\alpha+\alpha_1}\\
b_{\alpha,1}& 0& 0& 0& 0& 0\\
0& c_{\alpha,1}& 0& 0& 0& 0\\
b_{\alpha+\alpha_1,1}& 0& 0& 0& 0& 0\\
0& c_{\alpha+\alpha_1,1}& 0& 0& 0& 0
\end{pmatrix}.
$$

Now from the main condition the following matrix is zero:
{\tiny
$$
\begin{pmatrix}
0& 0& z_{\gamma_p,\gamma'}+c_{\alpha,1}& 0& c_{\alpha+\alpha_1,1}& -z_{\gamma_p,-\gamma_q}\\
0& 0& z_{-\gamma_p,\gamma'}& b_{\alpha,1}& 0& b_{\alpha+\alpha_1,1}\\
-b_{\alpha,1}& 0& z_{\gamma_q,\gamma'}& 0& 0& -z_{\gamma_q,-\gamma_q}\\
z_{-\gamma',\gamma_p}& z_{-\gamma',-\gamma_p}-c_{\alpha,1}+c_{\alpha+\alpha_1,1}& z_{-\gamma_q,\gamma'}+z_{-\gamma',\gamma_q}& z_{-\gamma',-\gamma_q}& z_{-\gamma',\gamma'}&
z_{-\gamma',-\gamma'}-z_{-\gamma_q,-\gamma_q}\\
-z_{\gamma_q,\gamma_p}-b_{\alpha,1}-b_{\alpha+\alpha_1,1}& -z_{\gamma_q,-\gamma_p}& z_{\gamma',\gamma'}-z_{\gamma_q,\gamma_q}& -z_{\gamma_q,-\gamma_q}& -z_{\gamma_q,\gamma'}&
-z_{\gamma',-\gamma_q}-z_{\gamma_q,-\gamma'}\\
0& -c_{\alpha+\alpha_1}& z_{-\gamma',\gamma'}& 0& 0& z_{-\gamma',-\gamma_q}
\end{pmatrix}.
$$
}

Clear that $b_{\alpha,1}=b_{\alpha+\alpha_1,1}=c_{\alpha+\alpha_1,1}=0$.

2) In the second case we have $\gamma''=\gamma_p-\alpha_1\in \Phi^+$, $\langle \gamma_q,\alpha_1\rangle =0$. This case is similar to the previous one, and on the basis part $\gamma_p,-\gamma_p,\gamma_q,-\gamma_q,\gamma'',-\gamma''$
the following matrix is zero:
{\tiny
$$
\begin{pmatrix}
-z_{\gamma'',\gamma_p}& -z_{\gamma_p,-\gamma''}-z_{\gamma'',-\gamma_p}& -z_{\gamma'',\gamma_q}+c_{\alpha,1}+c_{\alpha+\alpha_1,1}& -z_{\gamma'',-\gamma_q}& z_{\gamma_p,\gamma_p}-z_{\gamma'',\gamma''}& -z_{\gamma'',-\gamma''}\\
 0& -z_{-\gamma_p,-\gamma''}& 0& b_{\alpha,1}& z_{-\gamma_p,\gamma_p}& 0\\
 -b_{\alpha,1}& -z_{\gamma_q,-\gamma''}& 0& 0& z_{\gamma_q,\gamma_p}-b_{\alpha+\alpha_1,1}& 0\\
 0& -z_{-\gamma+q,-\gamma''}-c_{\alpha,1}& 0& 0& z_{-\gamma_q,\gamma_p}& -c_{\alpha+\alpha_1,1}\\
 0& -z_{\gamma'',-\gamma''}&  c_{\alpha+\alpha_1,1}& 0& z_{\gamma'',\gamma_p}& 0\\
 z_{-\gamma_p,\gamma_p}& -z_{-\gamma'',-\gamma''}+z_{-\gamma_p.-\gamma_p}& z_{-\gamma_p,\gamma_q}&
  b_{\alpha+\alpha_1,1}-b_{\alpha,1}& z_{-\gamma'',\gamma_p}+z_{-\gamma_p,\gamma''}& z_{-\gamma_p,-\gamma''}
  \end{pmatrix}.
  $$
}

We again have $b_{\alpha,1}=b_{\alpha+\alpha_1,1}=c_{\alpha+\alpha_1,1}=0$.

Considering a pair of roots $\gamma_s,\gamma_t$ with the property $\gamma_t-\gamma_s=\alpha+\alpha_1$ we similarly obtain two cases and $c_{\alpha,1}=0$.

Therefore we proved that $a_{i,1}=0$ for every $i=1,\dots,l$, $b_{\alpha,1}=c_{\alpha,1}=0$ for every $\alpha\in \Phi$. Consequently, the matrix~$Z$ commutes with $x_{\alpha_1}(1)$.

Similarly from other conditions we obtain that the matrix $Z$ commutes with all $x_{\alpha}(1)$, $\alpha\in \Phi$, and so it is scalar. Since $Z$ has zeros on the diagonal (by the construction) it is zero.
 Theorem~\ref{norm} is proved.

\section{Proof of the main theorem.}

\begin{lemma}\label{tor}
If $\overline t\in G_{\ad}(\Phi,R)$ is a torus element $(\Phi$ is one of the systems under consideration, $R$ is a local commutative ring\emph{)}, then there exists such a torus element $t\in G_\pi(\Phi,S)$ that a ring $S$ contains~$R$, $t$ lies in the normalizer of $G_\pi(\Phi,R)$, under factorization of $G_\pi(\Phi,S)$ by its center $t$ gives~$\overline t$.
\end{lemma}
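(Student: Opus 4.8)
The plan is to reduce the statement to a purely lattice-theoretic extension problem for characters and then to produce $S$ by adjoining roots of units. A torus element of $G_{\ad}(\Phi,R)$ has the form $\overline t=h(\chi)$ for some $\chi\in\Hom(\Lambda_{ad},R^*)$, where $\Lambda_{ad}$ is the root lattice, and the torus of $G_\pi(\Phi,S)$ is identified with $\Hom(\Lambda_\pi,S^*)$. Since $\Lambda_{ad}\subseteq\Lambda_\pi$ is a sublattice of finite index, the whole lemma amounts to finding a commutative ring $S\supseteq R$ and a character $\widetilde\chi\in\Hom(\Lambda_\pi,S^*)$ whose restriction to $\Lambda_{ad}$ equals $\chi$; I will then put $t=h(\widetilde\chi)$.

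To build such an extension I would apply the Smith normal form to the inclusion $\Lambda_{ad}\subseteq\Lambda_\pi$: choose a $\mathbb Z$-basis $f_1,\dots,f_l$ of $\Lambda_\pi$ and positive integers $d_1\mid\dots\mid d_l$ such that $d_1f_1,\dots,d_lf_l$ is a $\mathbb Z$-basis of $\Lambda_{ad}$. Write $r_i:=\chi(d_if_i)\in R^*$ and set
$$
S:=R[x_1,\dots,x_l]/(x_1^{d_1}-r_1,\dots,x_l^{d_l}-r_l),
$$
denoting by $s_i$ the image of $x_i$. Each defining polynomial is monic, so $S$ is a free $R$-module with basis the monomials $s_1^{e_1}\cdots s_l^{e_l}$, $0\le e_i<d_i$; in particular the structure map $R\to S$ is injective, so $S\supseteq R$. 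Moreover $s_i^{d_i}=r_i\in R^*$ forces $s_i\in S^*$ (with inverse $r_i^{-1}s_i^{d_i-1}$). Since $\Lambda_\pi$ is free on $f_1,\dots,f_l$, the assignment $\widetilde\chi(f_i):=s_i$ defines a homomorphism $\widetilde\chi\colon\Lambda_\pi\to S^*$, and $\widetilde\chi(d_if_i)=s_i^{d_i}=r_i=\chi(d_if_i)$ shows that $\widetilde\chi|_{\Lambda_{ad}}=\chi$. I then set $t:=h(\widetilde\chi)\in T_\pi(\Phi,S)\subseteq G_\pi(\Phi,S)$.

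It remains to verify the two required properties. For the normalizer condition, formula~\eqref{ee4} gives $t\,x_\beta(\xi)\,t^{-1}=x_\beta(\widetilde\chi(\beta)\xi)$ for every $\beta\in\Phi$ and $\xi\in R$; since every root $\beta$ lies in $\Lambda_{ad}$ we have $\widetilde\chi(\beta)=\chi(\beta)\in R^*$, whence $\widetilde\chi(\beta)\xi\in R$ and $t$ maps each elementary generator of $E_\pi(\Phi,R)$ back into $E_\pi(\Phi,R)$. As $t$ lies in the abelian torus it commutes with $T_\pi(\Phi,R)$, and because $G_\pi(\Phi,R)=E_\pi(\Phi,R)T_\pi(\Phi,R)$ over the local ring $R$, conjugation by $t$ preserves $G_\pi(\Phi,R)$, i.e. $t$ normalizes it. For the central factorization, the canonical projection of $G_\pi(\Phi,S)$ onto its adjoint quotient $G_{\ad}(\Phi,S)$ induces on tori the restriction map $\Hom(\Lambda_\pi,S^*)\to\Hom(\Lambda_{ad},S^*)$, under which $\widetilde\chi\mapsto\widetilde\chi|_{\Lambda_{ad}}=\chi$. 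Hence $t$ maps to $h(\chi)=\overline t$, as required.

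The only genuine obstacle is that $\chi$ generally does \emph{not} extend to $\Lambda_\pi$ over $R$ itself, because the values $r_i=\chi(d_if_i)\in R^*$ need $d_i$-th roots that may be missing in $R$ (this is exactly why passing to a larger ring $S$ is unavoidable, mirroring the relation between $G_\pi$ and $G_{\ad}$ recorded by the index $[\Lambda_\pi:\Lambda_{ad}]$). Adjoining these roots by monic relations is what simultaneously guarantees $R\hookrightarrow S$ and the invertibility of the $s_i$; everything else follows formally from~\eqref{ee4} and the decomposition $G_\pi(\Phi,R)=E_\pi(\Phi,R)T_\pi(\Phi,R)$.
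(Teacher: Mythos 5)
Your proof is correct, but it takes a genuinely different and more uniform route than the paper. The paper first reduces (via Weyl-group conjugacy of roots) to lifting a single generator $\overline t=t_{\alpha_1}(r)$ of the adjoint torus, and then for each type separately writes down an explicit extension $S$ (adjoining an $(l+1)$-st root of $r$ for $A_l$, a square root for $D_l$, a cube root for $E_6$, and taking $S=R$ for $E_7,E_8$) together with an explicit product $t=h_{\alpha_1}(s^{c_1})\cdots h_{\alpha_l}(s^{c_l})$, verifying by direct computation on every simple root that $t\,x_{\alpha_i}(u)\,t^{-1}=\overline t\,x_{\alpha_i}(u)\,\overline t^{-1}$; the exponents $c_i$ are in effect the coordinates of a fundamental coweight. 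You instead recast the whole problem as extending a character from the root lattice $\Lambda_{\ad}$ to the weight lattice $\Lambda_\pi$, solve it once and for all via the Smith normal form of the inclusion and a monic (hence faithfully flat, unit-preserving) extension $S=R[x_1,\dots,x_l]/(x_i^{d_i}-r_i)$, and then deduce both the normalizer property and the compatibility with the central quotient formally from the relation $h(\chi)x_\beta(\xi)h(\chi)^{-1}=x_\beta(\chi(\beta)\xi)$ and the decomposition $G_\pi(\Phi,R)=E_\pi(\Phi,R)T_\pi(\Phi,R)$ over a local ring. Your argument avoids both the case analysis and the slightly delicate reduction to a single basic torus element, works verbatim for any weight lattice between $\Lambda_{\ad}$ and $\Lambda_{\ssc}$, and explains conceptually why the degrees of the adjoined roots are exactly the elementary divisors of $\Lambda_\pi/\Lambda_{\ad}$; what it gives up is only the explicitness of the paper's lifts (concrete formulas for $t$ and the observation that for certain characters, e.g. over $E_7$ and $E_8$, no ring extension is needed at all).
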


\begin{proof}
Clear that it is sufficient to prove the lemma statement for basic elements of the torus $T_{\ad}(\Phi,R)$. Since all roots in the root systems under consideration are conjugate, we can take only one torus element: $\overline t=\chi_{\alpha_1}(r)$. This element acts on the elementary subgroup as follows:
$\overline t x_{\alpha}(s) \overline t^{-1}=x_{\alpha}(r^ks)$, where $\alpha=k\alpha_1+\beta$, $\alpha_1$ does not enter to the decomposition of $\beta$ into the sum of simple roots.

Clear that it is sufficient to construct such an extension  $S$ of~$R$  and a torus element $t$ of $G_\pi(\Phi,S)$ that $t x_{\alpha}(s) t^{-1}= \overline t x_\alpha(s) \overline t^{-1}$ for all $\alpha\in \Phi$, $s\in R$.

Clear that if the constructed torus element acts rightly on simple roots, it automatically acts rightly on all roots.

Consider three types of root systems under consideration separately.

{\bf Root systems  $A_l$.}
Consider a ring~$S$, that obtained from~$R$ by adding a root of the $l+1$-th power from~$r$ (namely, $S=R[x]/(x^{l+1}-r)$). Denote this root by~$s$. Then the obtained element is
$$
t=h_{\alpha_1}(s^l)h_{\alpha_2}(s^{l-1})\dots h_{\alpha_{l-1}}(s^2)h_{\alpha_l}(s).
$$

Namely,
\begin{multline*}
t x_{\alpha_1}(u) t^{-1}=h_{\alpha_1}(s^l) h_{\alpha_2}(s^{l-1}) x_{\alpha_1}(u) h_{\alpha_2}(s^{l-1})^{-1} h_{\alpha_1}(s^l)^{-1}=\\
=h_{\alpha_1}(s^l) x_{\alpha_1}(u/ s^{l-1}) h_{\alpha_1}(s^l)^{-1}= x_{\alpha_1}(u\cdot s^{2l}/s^{l-1})=\\
=x_{\alpha_1}(us^{l+1})=x_{\alpha_1}(u r),
\end{multline*}
for every simple root $\alpha_i$, $1<i< l$,
\begin{multline*}
t x_{\alpha_i}(u) t^{-1}=h_{\alpha_{i-1}}(s^{l-i+1}) h_{\alpha_i}(s^{l-i}) h_{\alpha_{i+1}}(s^{l-i-1})  x_{\alpha_i}(u) h_{\alpha_{i+1}}(s^{l-i-1})^{-1} h_{\alpha_i}(s^{l-i})^{-1} h_{\alpha_{i-1}}(s^{l-i+1})^{-1}=\\
 =x_{\alpha_i}(u\cdot s^{2l-2i}/(s^{l-i-1}s^{l-i+1})) =x_{\alpha_1}(u),
\end{multline*}
finally, for the root $\alpha_l$
$$
t x_{\alpha_l}(u) t^{-1}=h_{\alpha_{l-1}}(s^2) h_{\alpha_l}(s)  x_{\alpha_i}(u)  h_{\alpha_l}(s^2)^{-1} h_{\alpha_{l-1}}(s)^{-1}=x_{\alpha_i}(u\cdot s^2/s^2) =x_{\alpha_1}(u).
$$

\bigskip

{\bf Root systems  $D_l$.}
In the given case  $S$ is obtained from~$R$ by adding of square root from~$r$ (again denote it by~$s$). Here we can take the element
$$
t=h_{\alpha_1}(r)h_{\alpha_2}(r)\dots h_{\alpha_{l-2}}(r)h_{\alpha_{l-1}}(s)h_{\alpha_l}(s).
$$
Let us check it.
At first,
\begin{multline*}
t x_{\alpha_1}(u) t^{-1}=h_{\alpha_1}(r) h_{\alpha_2}(r) x_{\alpha_1}(u) h_{\alpha_2}(r)^{-1} h_{\alpha_1}(r)^{-1}=\\
=h_{\alpha_1}(r) x_{\alpha_1}(u/ r) h_{\alpha_1}(r)^{-1}= x_{\alpha_1}(u\cdot r^{2}/r)
=x_{\alpha_1}(u r),
\end{multline*}
for every simple root $\alpha_i$, $1<i< l-2$,
\begin{multline*}
t x_{\alpha_i}(u) t^{-1}=h_{\alpha_{i-1}}(r) h_{\alpha_i}(r) h_{\alpha_{i+1}}(r)  x_{\alpha_i}(u) h_{\alpha_{i+1}}(r)^{-1} h_{\alpha_i}(r)^{-1} h_{\alpha_{i-1}}(r)^{-1}=\\
 =x_{\alpha_i}(u\cdot r^2/(r\cdot r)) =x_{\alpha_i}(u),
\end{multline*}
for the root $\alpha_{l-2}$
\begin{multline*}
t x_{\alpha_{l-2}}(u) t^{-1}=h_{\alpha_{l-3}}(r) h_{\alpha_{l-2}}(r) h_{\alpha_{l-1}}(s) h_{\alpha_{l}}(s)  x_{\alpha_{l-2}}(u) h_{\alpha_{l-3}}(r)^{-1} h_{\alpha_{l-2}}(r)^{-1} h_{\alpha_{l-1}}(s)^{-1} h_{\alpha_{l}}(s)^{-1}=\\
 =x_{\alpha_{l-2}}(u\cdot r^2/(r\cdot s\cdot s)) =x_{\alpha_{l-2}}(u),
\end{multline*}
finally, for the root $\alpha_l$ (similarly for $\alpha_{l-1}$)
$$
t x_{\alpha_l}(u) t^{-1}=h_{\alpha_{l-2}}(r) h_{\alpha_l}(s)  x_{\alpha_l}(u)  h_{\alpha_l}(s)^{-1} h_{\alpha_{l-2}}(r)^{-1}
 =x_{\alpha_l}(u\cdot s^2/r) =x_{\alpha_l}(u).
$$

\bigskip

{\bf Root systems $E_6,E_7,E_8$.}
For the root system $E_6$ we take $s=\sqrt[3]{r}$ and
$$
t=h_{\alpha_1}(s^4)h_{\alpha_2}(s^3)h_{\alpha_3}(s^5)h_{\alpha_4}(s^6)h_{\alpha_5}(s^4)h_{\alpha_6}(s^2).
$$
Actually,
\begin{multline*}
t x_{\alpha_1}(u) t^{-1}=h_{\alpha_1}(s^4) h_{\alpha_3}(s^5) x_{\alpha_1}(u) h_{\alpha_3}(s^5)^{-1} h_{\alpha_1}(s^4)^{-1}=\\
=h_{\alpha_1}(s^4) x_{\alpha_1}(u/ s^5) h_{\alpha_1}(s^4)^{-1}= x_{\alpha_1}(u\cdot s^8/s^5)
=x_{\alpha_1}(u s^3)=x_{\alpha_1}(u\cdot r),
\end{multline*}
for $\alpha_2$
$$
t x_{\alpha_2}(u) t^{-1}=h_{\alpha_2}(s^3) h_{\alpha_4}(s^6) x_{\alpha_2}(u) h_{\alpha_4}(s^6)^{-1} h_{\alpha_2}(s^3)^{-1}
=x_{\alpha_2}(u\cdot (s^3)^2/s^6)=x_{\alpha_2}(u ),
$$
for $\alpha_3$
\begin{multline*}
t x_{\alpha_3}(u) t^{-1}=h_{\alpha_1}(s^4) h_{\alpha_3}(s^5) h_{\alpha_4}(s^6) x_{\alpha_3}(u) h_{\alpha_4}(s^6)^{-1} h_{\alpha_3}(s_5)^{-1} h_{\alpha_1}(s^4)^{-1}=\\
=x_{\alpha_3}(u\cdot (s^5)^2/(s^4\cdot s^6))=x_{\alpha_3}(u ),
\end{multline*}
for $\alpha_4$
\begin{multline*}
t x_{\alpha_4}(u) t^{-1}=h_{\alpha_2}(s^3) h_{\alpha_3}(s^5) h_{\alpha_4}(s^6) h_{\alpha_5}(s^4) x_{\alpha_4}(u) h_{\alpha_5}(s^4) h_{\alpha_4}(s^6)^{-1} h_{\alpha_3}(s_5)^{-1} h_{\alpha_2}(s^3)^{-1}=\\
=x_{\alpha_4}(u\cdot (s^6)^2/(s^4 \cdot s^5\cdot s^3))=x_{\alpha_4}(u ),
\end{multline*}
for $\alpha_5$
\begin{multline*}
t x_{\alpha_5}(u) t^{-1}=h_{\alpha_4}(s^6) h_{\alpha_5}(s^4) h_{\alpha_6}(s^2) x_{\alpha_5}(u) h_{\alpha_6}(s^2)^{-1} h_{\alpha_5}(s_4)^{-1} h_{\alpha_4}(s^6)^{-1}=\\
=x_{\alpha_5}(u\cdot (s^4)^2/(s^6\cdot s^2))=x_{\alpha_5}(u ),
\end{multline*}
finally, for $\alpha_6$
$$
t x_{\alpha_6}(u) t^{-1}=h_{\alpha_5}(s^4) h_{\alpha_6}(s^2) x_{\alpha_6}(u) h_{\alpha_6}(s^2)^{-1} h_{\alpha_5}(s^4)^{-1}
=x_{\alpha_6}(u\cdot (s^2)^2/s^4)=x_{\alpha_6}(u ).
$$

\bigskip

for the root system $E_7$ we have $S=R$,
$$
t=h_{\alpha_1}(r^2)h_{\alpha_2}(r^2)h_{\alpha_3}(r^3)h_{\alpha_4}(r^4)h_{\alpha_5}(r^3)h_{\alpha_6}(r^2)h_{\alpha_7}(r).
$$
Let us check it:
\begin{multline*}
t x_{\alpha_1}(u) t^{-1}=h_{\alpha_1}(r^2) h_{\alpha_3}(r^3) x_{\alpha_1}(u) h_{\alpha_3}(r^3)^{-1} h_{\alpha_1}(r^2)^{-1}=\\
=h_{\alpha_1}(r^2) x_{\alpha_1}(u/ r^3) h_{\alpha_1}(r^2)^{-1}= x_{\alpha_1}(u\cdot r^4/r^3)
=x_{\alpha_1}(u\cdot r),
\end{multline*}
for $\alpha_2$
$$
t x_{\alpha_2}(u) t^{-1}=h_{\alpha_2}(r^2) h_{\alpha_4}(r^4) x_{\alpha_2}(u) h_{\alpha_4}(r^4)^{-1} h_{\alpha_2}(r^2)^{-1}
=x_{\alpha_2}(u\cdot (r^2)^2/r^4)=x_{\alpha_2}(u ),
$$
for $\alpha_3$
\begin{multline*}
t x_{\alpha_3}(u) t^{-1}=h_{\alpha_1}(r^2) h_{\alpha_3}(r^3) h_{\alpha_4}(r^4) x_{\alpha_3}(u) h_{\alpha_4}(r^4)^{-1} h_{\alpha_3}(r^3)^{-1} h_{\alpha_1}(r^2)^{-1}=\\
=x_{\alpha_3}(u\cdot (s^3)^2/(r^2\cdot r^4))=x_{\alpha_3}(u ),
\end{multline*}
for $\alpha_4$
\begin{multline*}
t x_{\alpha_4}(u) t^{-1}=h_{\alpha_2}(r^2) h_{\alpha_3}(r^3) h_{\alpha_4}(r^4) h_{\alpha_5}(r^3) x_{\alpha_4}(u) h_{\alpha_5}(r^3) h_{\alpha_4}(r^4)^{-1} h_{\alpha_3}(r^3)^{-1} h_{\alpha_2}(r^2)^{-1}=\\
=x_{\alpha_4}(u\cdot (s^4)^2/(r^2 \cdot r^3\cdot r^3))=x_{\alpha_4}(u ),
\end{multline*}
for $\alpha_5$
\begin{multline*}
t x_{\alpha_5}(u) t^{-1}=h_{\alpha_4}(r^4) h_{\alpha_5}(r^3) h_{\alpha_6}(r^2) x_{\alpha_5}(u) h_{\alpha_6}(r^2)^{-1} h_{\alpha_5}(r^3)^{-1} h_{\alpha_4}(r^4)^{-1}=\\
=x_{\alpha_5}(u\cdot (r^3)^2/(r^4\cdot r^2))=x_{\alpha_5}(u ),
\end{multline*}
for $\alpha_6$
\begin{multline*}
t x_{\alpha_6}(u) t^{-1}=h_{\alpha_5}(r^3) h_{\alpha_6}(r^2) h_{\alpha_7}(r) x_{\alpha_6}(u) h_{\alpha_7}(r)^{-1} h_{\alpha_6}(r^2)^{-1} h_{\alpha_5}(r^3)^{-1}=\\
=x_{\alpha_6}(u\cdot (r^2)^2/(r^3\cdot r))=x_{\alpha_6}(u ),
\end{multline*}
finally, for $\alpha_7$
$$
t x_{\alpha_7}(u) t^{-1}=h_{\alpha_6}(r^2) h_{\alpha_7}(r) x_{\alpha_7}(u) h_{\alpha_7}(r)^{-1} h_{\alpha_6}(r^2)^{-1}
=x_{\alpha_7}(u\cdot r^2/r^2)=x_{\alpha_7}(u ).
$$

\bigskip

At the end, for $E_8$ we have $S=R$ and
$$
t=h_{\alpha_1}(r^4)h_{\alpha_2}(r^5)h_{\alpha_3}(r^7)h_{\alpha_4}(r^{10})h_{\alpha_5}(r^8)h_{\alpha_6}(r^6)h_{\alpha_7}(r^4)h_{\alpha_8}(r^2).
$$
Check it again:
\begin{multline*}
t x_{\alpha_1}(u) t^{-1}=h_{\alpha_1}(r^4) h_{\alpha_3}(r^7) x_{\alpha_1}(u) h_{\alpha_3}(r^7)^{-1} h_{\alpha_1}(r^4)^{-1}=\\
=h_{\alpha_1}(r^4) x_{\alpha_1}(u/ r^7) h_{\alpha_1}(r^4)^{-1}= x_{\alpha_1}(u\cdot r^8/r^3)
=x_{\alpha_1}(u\cdot r),
\end{multline*}
for $\alpha_2$
$$
t x_{\alpha_2}(u) t^{-1}=h_{\alpha_2}(r^5) h_{\alpha_4}(r^{10}) x_{\alpha_2}(u) h_{\alpha_4}(r^{10})^{-1} h_{\alpha_2}(r^5)^{-1}
=x_{\alpha_2}(u\cdot (r^5)^2/r^{10})=x_{\alpha_2}(u ),
$$
for $\alpha_3$
\begin{multline*}
t x_{\alpha_3}(u) t^{-1}=h_{\alpha_1}(r^4) h_{\alpha_3}(r^7) h_{\alpha_4}(r^{10}) x_{\alpha_3}(u) h_{\alpha_4}(r^{10})^{-1} h_{\alpha_3}(r^7)^{-1} h_{\alpha_1}(r^4)^{-1}=\\
=x_{\alpha_3}(u\cdot (s^7)^2/(r^4\cdot r^{10}))=x_{\alpha_3}(u ),
\end{multline*}
for $\alpha_4$
\begin{multline*}
t x_{\alpha_4}(u) t^{-1}=h_{\alpha_2}(r^5) h_{\alpha_3}(r^7) h_{\alpha_4}(r^{10}) h_{\alpha_5}(r^8) x_{\alpha_4}(u) h_{\alpha_5}(r^8) h_{\alpha_4}(r^{10})^{-1} h_{\alpha_3}(r^7)^{-1} h_{\alpha_2}(r^5)^{-1}=\\
=x_{\alpha_4}(u\cdot (s^{10})^2/(r^5 \cdot r^7\cdot r^8))=x_{\alpha_4}(u ),
\end{multline*}
for $\alpha_5$
\begin{multline*}
t x_{\alpha_5}(u) t^{-1}=h_{\alpha_4}(r^{10}) h_{\alpha_5}(r^8) h_{\alpha_6}(r^6) x_{\alpha_5}(u) h_{\alpha_6}(r^6)^{-1} h_{\alpha_5}(r^8)^{-1} h_{\alpha_{10}}(r^4)^{-1}=\\
=x_{\alpha_5}(u\cdot (r^8)^2/(r^{10}\cdot r^6))=x_{\alpha_5}(u ),
\end{multline*}
for $\alpha_6$
\begin{multline*}
t x_{\alpha_6}(u) t^{-1}=h_{\alpha_5}(r^8) h_{\alpha_6}(r^6) h_{\alpha_7}(r^4) x_{\alpha_6}(u) h_{\alpha_7}(r^4)^{-1} h_{\alpha_6}(r^6)^{-1} h_{\alpha_5}(r^8)^{-1}=\\
=x_{\alpha_6}(u\cdot (r^6)^2/(r^8\cdot r^4))=x_{\alpha_6}(u ),
\end{multline*}
for $\alpha_7$
\begin{multline*}
t x_{\alpha_7}(u) t^{-1}=h_{\alpha_6}(r^6) h_{\alpha_7}(r^4) h_{\alpha_8}(r^2) x_{\alpha_7}(u) h_{\alpha_8}(r^2)^{-1} h_{\alpha_7}(r^4)^{-1} h_{\alpha_6}(r^6)^{-1}=\\
=x_{\alpha_7}(u\cdot (r^4)^2/(r^6\cdot r^2))=x_{\alpha_7}(u ),
\end{multline*}
finally, for $\alpha_8$
$$
t x_{\alpha_8}(u) t^{-1}=h_{\alpha_7}(r^4) h_{\alpha_8}(r^2) x_{\alpha_8}(u) h_{\alpha_8}(r^2)^{-1} h_{\alpha_7}(r^4)^{-1}
=x_{\alpha_8}(u\cdot (r^2)^2/r^4)=x_{\alpha_8}(u ).
$$

\medskip

Therefore, all obtained extensions of~$R$ are found, elements $ t$ are constructed.
\end{proof}

Let us prove now the main theorem (Theorem~\ref{main}).

\begin{proof}
The case when the Chevalley group is elementary adjoint,
evidently follows from Theorems \ref{old} and~\ref{norm}.

Suppose now that we have any other elementary Chevalley group
$E_\pi(\Phi,R)$ and some its automorphism~$\varphi$. Its quotient by the center is isomorphic to the elementary adjoint group $E_{\ad}(\Phi,R)$,
so we have an automorphism $\overline \varphi$ of the group
$E_{\ad}(\Phi,R)$. Such an automorphism is decomposed to
$$
\overline \varphi = \overline \rho \circ
\varphi_{\overline g}\circ \overline \delta,
$$
where $\overline \rho$ is a ring automorphism,  $\overline \delta$ is a graph automorphism, $\varphi_{\overline g}$ is a conjugation with an element
 $\overline g\in G_{\ad}(\Phi,R)$.  Note that an automorphism $\overline \rho$  can be easily changed
to a ring automorphism $\rho$  of the group
$E_\pi(\Phi,R)$ such that $\rho$  on equivalence classes of the group $E_{\pi}(\Phi,R)$ by its center acts in the same way as $\overline \rho$.

Consider now the automorphism $\varphi_{\overline g}$. Note that
$\overline g= \overline t \overline e$, where $\overline t\in
T_{\ad}(\Phi, R)$, $\overline e\in E_{\ad}(\Phi, R)$. For the element
$\overline e$ we can find such $e\in E_{\pi}(\Phi,R)$ that the image
$e$ under factorization by the center is~$\overline e$. The element $\overline t$ has its inverse image $t\in T_{\pi}(\Phi, S)$,
where $S$ is a ring, that is obtained from~$R$ by adding some scalars (see Lemma~\ref{tor}). Also $t$ normalizes the group $E_\pi(\Phi,R)$.
Consider now $g=te\in G_{\pi}(\Phi,S)$. Clear that under factorization of $E_\pi (\Phi, R)$ by the center the automorphism
$\varphi_g$ gives us the automorphism~$\varphi_{\overline g}$.

Now consider the automorphism
$$
\psi=\varphi_{g^{-1}}\circ \rho^{-1}\circ \varphi.
$$
It is an automorphism of the group $E_\pi(\Phi,R)$, under factorization by the center it gives a graph automorphism of the quotient group. Therefore, $\psi$ maps every $x_{\alpha}(t)$, $\alpha\in \Phi$, $t\in R$, into $\lambda_{\alpha,t} \cdot x_{\delta(\alpha)}(\pm t)$, where for any $\alpha$ and $t$ $\lambda_{\alpha,t}$ is a central element of the Chevalley group.

Note that $x_{\alpha+\beta}(t)=[x_\alpha(t),x_\beta(1)]$, so
\begin{multline*}
\lambda_{\alpha+\beta,t}x_{\delta(\alpha+\beta)}(\pm t)=\psi(x_{\alpha+\beta}(t))=[\psi(x_\alpha(t)),\psi(x_\beta(1))]=\\ =
[\lambda_{\alpha,t}x_{\delta(\alpha)}(\pm t), \lambda_{\beta,1} x_{\delta(\beta)}(\pm 1)]=[x_{\delta(\alpha)}(\pm t), x_{\delta(\beta)}
(\pm 1)]=x_{\delta(\alpha+\beta)}(\pm t).
\end{multline*}

Since every root from~$\Phi$ can be represented as a sum of to roots, we have $\lambda_{\alpha,t}=1$ for all $\alpha\in \Phi$, $t\in R$.
Therefore, $\psi$ is a graph automorphism.

Consequently we proved the theorem for all elementary Chevalley groups of types under consideration.
Namely, we proved that every automorphism of $E_{\pi}(\Phi,R)$ is a composition of ring, graph and inner (but not strictly inner) automorphisms.

Now suppose that we have a Chevalley group $G_\pi(\Phi,R)$ and its automorphism~$\varphi$.
Since an elementary group
$E_\pi(\Phi,R)$ is characteristic (commutant) in
$G_\pi(\Phi,R)$, then $\varphi$ is simultaneously an automorphism of the elementary subgroup. On the elementary subgroup it is the composition $\rho\circ \delta\circ \varphi_g$, $g\in
G_{\pi}(\Phi,S)$, also $g=te$, where $e\in E_\pi(\Phi,R)$, $t\in T_\pi (\Phi,S)$. The first two automorphisms are clearly extended to the automorphisms of the whole group $G_{\pi}(\Phi,R)$, and the third is an automorphism of this group, since the torus elements commutes. Then the composition
 $\psi=\varphi_{g^{-1}}\circ \delta^{-1}\circ
\rho^{-1}\circ \varphi$ is an automorphism of
$G_\pi(\Phi,R)$, that acts identically on the elementary subgroup.
Since $G_\pi(\Phi,R)=T_\pi(\Phi,R)\cdot E_{\pi}(\Phi,R)$, we need only to understand how  $\psi$ acts on torus elements. The element
$t^{-1}\varphi(t)\in G_\pi(\Phi,R)$ lies in the center of
$E_\pi(\Phi,)$, therefore, in the center of $G_\pi(\Phi,R)$.
Consequently the automorphism $\psi$ is central.

Therefore, Theorem~\ref{main} is proved for all Chevalley groups under consideration.
\end{proof}


\begin{thebibliography}{99}


\bibitem{Abe_OSN} Abe E. Automorphisms of Chevalley groups over commutative
rings. Algebra and Analysis, 5(2), 1993, 74--90.


\bibitem{Abe1}  Abe E. Chevalley groups over local rings. Tohoku
Math. J., 1969, 21(3), 474--494.


\bibitem{v19} Abe E., Suzuki K. On normal subgroups of Chevalley
groups over commutative rings. Tohoku Math. J., 1976, 28(1),
185-198.

\bibitem{v3} Borel A. Properties and linear representations of Chevalley groups. Seminar in algebraic groups, M., 1973, 9--59.

\bibitem{v22} Borel A., Tits J. Homomorphismes ``abstraits'' de
groupes alg\'ebriques simples. Ann. Math., 1973, 73, 499--571.

\bibitem{Burbaki} Bourbaki N. Groupes et Alg\'ebres de Lie. Hermann, 1968.


\bibitem{Bun1} Bunina E.I. Automorphisms of Chevalley groupsof some types over local rings. Russian Mathematical Surveys, 2007,
62(5), 143--144.

\bibitem{Bun2} Bunina E.I. Automorphisms of elementary adjoint Chevalley groups of types $A_l,D_l,E_l$ over local rings with~$1/2$.
Algebra and Logic, 2009, to appear (arXiv:math/0702046).


\bibitem{v23} Carter R.W. Simple groups of Lie type, 2nd ed., Wiley,
London et al., 1989.

\bibitem{v24} Carter R.W., Chen Yu. Automorphisms of affine
Kac--Moody groups and related Chevalley groups over rings. J.
Algebra, 1993, 155, 44--94.

\bibitem{v25} Chen Yu. Isomorphic Chevalley groups over integral
domains. Rend. Sem.  Mat. univ. Padova, 1994, 92, 231--237.

\bibitem{v26} Chen Yu. On representations of elementary subgroups of
Chevalley groups over algebras. proc. Amer. Math. Soc., 1995,
123(8), 2357--2361.

\bibitem{v27} Chen Yu. Automorphisms of simple Chevalley groups over
$\mathbb Q$-algebras. Tohoku Math. J., 1995, 348, 81--97.

\bibitem{v28} Chen Yu. Isomorphisms of adjoint Chevalley groups over
integral domains. Trans. Amer. Math. Soc., 1996, 348(2), 1--19.

\bibitem{v29} Chen Yu. Isomorphisms of Chevalley groups over
algebras. J. Algebra, 2000, 226, 719--741.

\bibitem{Chevalley} Chevalley C. Certain schemas des groupes semi-simples.
Sem. Bourbaki,  1960--1961, 219, 1--16.


\bibitem{v30} Demazure M., Gabriel P. Groupes alg\'ebriques. I.
North Holland, Amsterdam et al., 1970, 1--770.


\bibitem{H} Humphreys J.\,F., On the automorphisms of infinite Chevalley groups,
Canad. J. Math., 21, 1969, 908-911.

\bibitem{Hamfris} Humphreys J.E. Introduction to Lie algebras and
representation theory. Springer--Verlag New York, 1978.

\bibitem{Klyachko}  Klyachko Anton A. Automorphisms and isomorphisms
of Chevalley groups and algebras. arXiv:0708.2256v3 (2007).

\bibitem{v38} Matsumoto H. Sur les sous-groupes arithm\'etiques des
groupes semi-simples deploy\'es. Ann. Sci. Ecole Norm. Sup.
$4^{\text{\'eme}}$ s\'er., 1969, 2, 1--62.


\bibitem{Steinberg}
Steinberg R. Lectures on Chevalley groups, Yale University, 1967.

\bibitem{Stb1} Steinberg R., Automorphisms of finite linear groups,
Canad. J. Math., 121, 1960, 606--615.



\bibitem{v43} Vavilov N.A. Structure of Chevalley groups over
commutative rings. Proc. Conf. Non-associative algebras and related
topics (Hiroshima -- 1990). World Sci. Publ., London et al., 1991,
219--335.

\bibitem{VavLuzg}	Vavilov N.A., Luzgarev A.Yu. Normalizer of a Chevalley group of type $\mathrm{E}_6$.
Algebra and Analyses, 2007, 19(5), 37–-64 (in Russian).


\bibitem{VavPlotk1} Vavilov N.A., Plotkin E.B. Chevalley groups over
commutative rings. I. Elementary calculations. Acta Applicandae
Math., 1996, 45, 73--115.


\end{thebibliography}
\end{document}